\numberwithin{equation}{section}
\newtheorem{theorem}{Theorem}[section] 
\newtheorem{lemma}[theorem]{Lemma}
\newtheorem{proposition}[theorem]{Proposition}
\newtheorem{corollary}[theorem]{Corollary}
\theoremstyle{definition}
\newtheorem{remark}[theorem]{Remark}
\newtheorem{example}[theorem]{Example}
\newtheorem{notation}[theorem]{Notation}
\newcommand{\popo}{\mathbb{P}^1 \times \mathbb{P}^1}
\newcommand{\pr}{\mathbb{P}}
\newcommand{\supp}{\operatorname{Supp}}
\newcommand{\W}{\mathcal W}
\begin{document}


\title[The minimal free resolution of fat almost complete intersections in
$\popo$]{The minimal free resolution of fat almost complete
intersections in $\popo$}

\author{Giuseppe Favacchio}
\address{Dipartimento di Matematica e Informatica\\
	Viale A. Doria, 6 - 95100 - Catania, Italy} \email{favacchio@dmi.unict.it} \urladdr{}

\author{Elena Guardo}
\address{Dipartimento di Matematica e Informatica\\
Viale A. Doria, 6 - 95100 - Catania, Italy}
\email{guardo@dmi.unict.it}
\urladdr{http://www.dmi.unict.it/$\sim$guardo/}

\keywords{points in $\popo$, symbolic powers, resolution, arithmetically
Cohen-Macaulay}
\subjclass[2010]{13C40, 13F20, 13A15, 14C20, 14M05}
\thanks{Version: Sept 15, 2016}

\begin{abstract}
A current research theme is to compare symbolic powers of an ideal
$I$ with the regular powers of $I$. In this paper, we focus on the
case that $I=I_X$ is an ideal defining an almost complete
intersection (ACI) set of points $X$ in $\popo$. In particular,
we describe a minimal free bigraded resolution of a non
arithmetically Cohen-Macaulay (also non homogeneus) set $\mathcal Z$  of fat
points whose support is an ACI generalizing Corollary 4.6 given in \cite{MFO} for homogeneous sets of triple points. We call $\mathcal Z$ a fat ACI. We also show that its symbolic and ordinary powers are equal, i.e, 
$I_{\mathcal Z}^{(m)}=I_{\mathcal Z}^{m}$ for any $m\geq 1.$

\end{abstract}

\maketitle

\renewcommand{\thetheorem}{\thesection.\arabic{theorem}}
\setcounter{theorem}{0}

\section{Introduction}
A research problem of interest regarding which symbolic powers of
ideals are contained in a given ordinary power of the ideal have
recently been studied in \cite{BCH,BH,BH2,HH}, with a focus on
ideals defining 0-dimensional subschemes of projective space.

Inspired by recent papers of \cite{MFO, GHVT1, GHVT2, GHVT3}, we focus on the case that $I$ is an ideal defining a
set of points in $\popo$ since, in particular, $I$ can be
considered as a set of particular lines in $\pr^3$.

Throughout this paper,  the polynomial ring
$R:=k[x_0,x_1,x_2,x_3]$ with the bigrading given by $\deg x_0 =
\deg x_1 = (1,0)$ and $\deg x_2 = \deg x_3 = (0,1)$ is the
coordinate ring of $\popo$. A point is denoted by $P = [a_0 : a_1]
\times [b_0 : b_1]$ in $\mathbb{P}^1 \times \mathbb{P}^1$ and it
is defined by the bihomogeneous ideal $I_P = (a_1 x_0 - a_0 x_1,
b_1 x_2 - b_0 x_3)$.   A set of points $X = \{P_1,\ldots,P_s\}
\subseteq \popo$ is then associated to the bihomogeneous ideal
$I_X = \bigcap_{P \in X} I_P$. If we only consider the standard
grading of this ideal, then $I_X$ defines a union $X$ of lines in
$\pr^3$. Given a set of distinct points $X=\{P_1,\dots,P_s\}$ and
positive integers $m_1,\dots,m_s$, we call $Z=m_1P_1+\dots+m_sP_s$ a
{\it set of fat points supported at $X$}.

Given a homogeneous ideal $I \subset R$, the $m$-th symbolic power
of $I$ is the ideal $I^{(m)} = R\cap (\cap_{P\in Ass(I)}(I^m R_P
))$. Following \cite{BH2}, an ideal of the form $I =
\cap_{i}(I_{P_i}^{m_i}$ ) where $P_1,\dots , P_n$ are distinct
points of $\popo$, $ I_{P_i}$  is the ideal generated by all forms
vanishing at $P_i$ and each $m_i$ is a non-negative integer,
$I^{(m)}$ turns out to be $\cap_{i}(I_{P_i}^{mm_i})$. If $I^m$ is
the usual power, then there is clearly a containment $I^m
\subseteq I^{(m)}$ and  a much more difficult problem is to
determine when there are containments of the form
$I^{(m)}\subseteq I^r$. Furthermore, the $m$-th symbolic power of
$I_X$ has the form $I_{X}^{(m)} =\cap_{i=1}^s I_{P_i}^m$. The
scheme defined by $I_{X}^{(m)}$ is sometimes referred to as a {\it
homogeneous} set of fat point and denoted by $mP_1 + \dots +
mP_s$.

We say that a set of points $X$ in $\popo$  is {\it arithmetically
Cohen-Macaulay} (ACM) if its coordinate ring $R/I_X$ is
Cohen-Macaulay. A set of points $X$ is a {\it complete intersection} if
$I_X$ is a complete intersection. We write that $X=CI(a,b)$ if
$I_X$ is generated by a form of degree $(a,0)$ and a form of
degree $(0,b).$ The set $X$ is an {\it almost complete intersection}
(ACI) if the number of minimal generators is one more than the
codimension of $X$, i.e., $X$ has three minimal generators.

 Let $X$ be an almost complete intersection in $\popo$ and let $Z=m_1P_1+\dots+m_sP_s$ be a 
 set of fat points supported at $X$. We call $\mathcal Z$ a {\it fat almost complete intersection}.

A classification of reduced and fat ACM sets of points of $\popo$
can be found in \cite{GVbook} Theorem 4.11 and Theorem 6.21,
respectively.

In this paper,  we focus on the study of special sets of fat points $\mathcal Z$ whose support is either  ACM or non ACM. In particular,  we give a minimal free bigraded resolution of $\mathcal Z$ in both cases (see Theorem \ref{Start1} and Theorem \ref{resolution}).

In \cite{GHVT2},  Theorem 1.1 the authors proved the following
\begin{theorem}[Theorem 1.1, \cite{GHVT2}]\label{cubo}
Let  $X\subseteq \popo$ be an ACM set of points.
Then $I_X^m =I_X^{(m)}$ for all $m \geq 1$
if and only if  $I_X^3 =I_X^{(3)}$.
\end{theorem}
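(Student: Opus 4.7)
The forward implication is trivial. For the converse, assume $I_X^{(3)} = I_X^3$; one must show $I_X^{(m)} = I_X^m$ for all $m \geq 1$. Since $I_X^m \subseteq I_X^{(m)}$ holds unconditionally, the content is the reverse containment, and this is entirely a question about embedded primary components. After localizing at any $P_i \in X$, the stalk $(I_X)_{P_i}$ is a complete intersection of two regular parameters in a regular local ring, so $(I_X^m)_{P_i} = (I_X^{(m)})_{P_i}$ for every $m$. Therefore the only possible supports of $I_X^{(m)}/I_X^m$ are the two bihomogeneous irrelevant primes $B_1 = (x_0,x_1)$ and $B_2 = (x_2,x_3)$ of $\popo$, and the problem reduces to ruling these out as embedded components of $R/I_X^m$.

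My plan is to proceed by induction on $m$ after first establishing the case $m = 2$ separately. For $m = 2$, the ACM hypothesis gives $I_X$ a Hilbert-Burch presentation (height $2$ perfect ideal in the regular ring $R$), so $R/I_X$ is Cohen-Macaulay of depth $2$. Using this presentation together with a depth / local-cohomology calculation against $B_1$ and $B_2$, one shows no embedded component of $R/I_X^2$ lies above $B_1$ or $B_2$, yielding $I_X^{(2)} = I_X^2$. For $m \geq 4$ the inductive step combines the hypothesized equality at $m=3$ with the previously established equalities via a colon argument: starting from the obvious containment $I_X^{(m)} \subseteq I_X^{(m-3)} \cdot \bigl(I_X^{(m)} : I_X^{(m-3)}\bigr)$ and showing the colon lies inside $I_X^{(3)} = I_X^3$, the induction hypothesis $I_X^{(m-3)} = I_X^{m-3}$ then forces $I_X^{(m)} \subseteq I_X^{m-3} \cdot I_X^3 = I_X^m$.

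The main obstacle, I expect, is controlling the colon $I_X^{(m)} : I_X^{(m-3)}$: symbolic powers do not factor multiplicatively in general, so the containment $I_X^{(m)} : I_X^{(m-3)} \subseteq I_X^{(3)}$ is not automatic. Here the ACM hypothesis is essential: the Cohen-Macaulay structure of $R/I_X$ supplies enough depth to run an Artin-Rees or persistence-of-associated-primes argument that pins the colon down. Executing this carefully in the bihomogeneous setting of $\popo$ (where the irrelevant ideal $B_1 \cap B_2$ is not maximal, and standard depth lemmas must be adapted to the two-component grading) is the technical heart of the proof, and is the step at which I would expect the argument to require the most effort.
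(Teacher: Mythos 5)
This theorem is quoted from \cite{GHVT2}; the present paper does not prove it, so your proposal must stand on its own, and as written it has a fatal gap in the inductive step. The ``obvious containment'' $I_X^{(m)} \subseteq I_X^{(m-3)}\cdot\bigl(I_X^{(m)}:I_X^{(m-3)}\bigr)$ is false: for ideals $I\subseteq J$ one only has $J\cdot(I:J)\subseteq I$, not the reverse (e.g.\ $I=(x^2,y^2)\subseteq J=(x,y)$ in $k[x,y]$ gives $I:J=(x,y)^2$ and $J\cdot(I:J)=(x,y)^3\not\supseteq I$). What your induction actually needs is $I_X^{(m)}\subseteq I_X^{(m-3)}\cdot I_X^{(3)}$, i.e.\ reverse multiplicativity of symbolic powers, and that is essentially the whole content of the theorem rather than a step toward it. The base case $m=2$ is also not salvageable by the mechanism you describe: being a height-two perfect (Hilbert--Burch) ideal is not enough to force $I^{(2)}=I^2$ --- three general points in $\mathbb{P}^2$ already violate that --- so the ACM hypothesis must enter through the specific structure of ACM points in $\popo$, namely that $I_X$ and its symbolic powers are generated by products of the rulings (products of powers of $Q_1,Q_2,U_1,U_2$). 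A smaller slip: $B_1=(x_0,x_1)$ does not contain $I_X$ and hence cannot be an associated prime of $R/I_X^m$; the correct reduction is that $I_X^{(m)}$ is the saturation of $I_X^m$ with respect to $B_1\cap B_2$, so the candidate embedded primes are those containing some $I_{P_i}$ together with $B_1$ or $B_2$.

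The argument that actually works, and which this paper adapts in Theorem \ref{compower}, runs differently: first, $I_X^{(3)}=I_X^3$ forces $X$ to be a complete or almost complete intersection (this is the content of Corollary \ref{P1xP1 result}(b),(c), via the tuple $\alpha_X$ taking at most two values); then, for such $X$, every minimal generator of $I_X^{(m)}$ has the form $F=Q_1^{a_1}Q_2^{a_2}U_1^{b_1}U_2^{b_2}$, and Euclidean division of the exponents by $m$ produces a factorization $F=F_1F_2$ with $F_1\in I_X$ and $F_2\in I_X^{(m-1)}$, exactly as in the proof of Theorem \ref{compower}; induction on $m$ then gives $I_X^{(m)}=I_X^m$ for all $m$. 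If you want to repair your write-up, this explicit factorization of the monomial-in-linear-forms generators is the ingredient to supply; no colon or depth argument is needed.
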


In \cite{MFO}, the authors proposed a classification of the sets of points $X \subseteq \mathbb{P}^1
\times \mathbb{P}^1$ satisfying $I_X^3 = I_X^{(3)}.$ We
require the following notation. Let $\pi_1:\popo \rightarrow
\pr^1$ denote the natural projection
\[P = A \times B \mapsto A.\]
If $X \subseteq \popo$ is a finite set of reduced points, let
$\pi_1(X) = \{H_1,\ldots,H_r\}$ be the set of distinct first
coordinates that appear in $X$.  For $i=1,\ldots,h$, set $\overline{\alpha_i}
= |X \cap \pi_1^{-1}(H_i)|$, i.e., the number of points in $X$
whose first coordinate is $H_i$.  After relabeling the $H_i$'s so
that $\overline{\alpha_i} \geq \overline{\alpha_{i+1}}$ for $i = 1,\ldots,r-1$, we set
$\alpha_X = (\overline{\alpha_1},\ldots,\overline{\alpha_r})$. In particular,  they  proved the following two results:

\begin{corollary}[Corollary 4.4, \cite{MFO}] \label{P1xP1 result}
Let $X \subseteq \mathbb{P}^1 \times \mathbb{P}^1$ be any ACM set
of points.  Then
\medskip
\begin{itemize}
\item[(a)] $I_X^2 = I_X^{(2)}$.
\medskip
\item[(b)] The following are equivalent:
\medskip
\begin{itemize}
\item[(i)] $I_X^2$ defines an ACM scheme;
\item[(ii)] $I_X^3 = I_X^{(3)}$ is the saturated ideal of an ACM scheme;
\item[(iii)] $X$ is a complete intersection;
\item[(iv)] $\alpha_X = (a,a,\ldots,a)$ for some integer $a \geq 1$.
\end{itemize}
\medskip
\item[(c)] The following are equivalent:
\medskip
\begin{itemize}
\item[(i)] $I_X^3 = I_X^{(3)}$ is the saturated ideal of a non-ACM scheme;
\item[(ii)] $I_X$ is an almost complete intersection;
\item[(iii)] $\alpha_X = (a,\ldots,a,b,\ldots,b)$ for integers $a > b \geq 1$.
\end{itemize}
\end{itemize}
\end{corollary}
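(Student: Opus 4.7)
My plan is to prove the three parts separately, relying on the structure theory of ACM sets of points in $\popo$ developed in \cite{GVbook} and on the reduction supplied by Theorem \ref{cubo}. For part (a), the key observation is that if $X$ is ACM and reduced, then $I_X$ is a perfect ideal of height two which is locally a complete intersection, since each associated prime $I_{P_i}$ is generated by a form of bidegree $(1,0)$ and one of bidegree $(0,1)$. Under these hypotheses $I_X^{(2)}=I_X^2$, and I would verify this either by invoking the general comparison of symbolic and ordinary squares for locally complete-intersection perfect height-two ideals, or, more concretely, by writing down explicit bihomogeneous generators of $I_X$ read off from the Ferrers diagram attached to $\alpha_X$ in \cite{GVbook} and checking directly that the pairwise products of these generators saturate to $\bigcap_i I_{P_i}^2$.

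For parts (b) and (c), I would cycle through the listed conditions. In (b), the implication (iii) $\Rightarrow$ (iv) is immediate: if $I_X=(F,G)$ with $\deg F=(a,0)$ and $\deg G=(0,b)$, then $X$ is an $a\times b$ grid and $\alpha_X=(b,\ldots,b)$. The converse (iv) $\Rightarrow$ (iii) uses the classification of \cite{GVbook} to show that a constant tuple $\alpha_X$ together with ACMness forces the minimal generators of $I_X$ to reduce to a single vertical and a single horizontal form. The implications (iii) $\Rightarrow$ (i) and (iii) $\Rightarrow$ (ii) follow because positive powers of a length-two regular sequence remain perfect of height two, hence ACM, while Theorem \ref{cubo} together with part (a) ensures $I_X^3=I_X^{(3)}$. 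For (c), the equivalence (ii) $\Leftrightarrow$ (iii) is the ACM--ACI classification from \cite{GVbook}, which describes the bigraded Betti numbers in terms of $\alpha_X$; the implication (iii) $\Rightarrow$ (i) is then obtained by exhibiting explicit minimal generators of $I_X^{(3)}$ and showing that the resulting minimal bigraded resolution has length exceeding the codimension, which by Auslander--Buchsbaum forces non-ACMness.

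The main obstacle is the direction (i) $\Rightarrow$ (iii) in part (b) and, analogously, (i) $\Rightarrow$ (iii) in part (c): starting only from ACMness (respectively non-ACMness) of a power of $I_X$, one must recover the combinatorial shape of $\alpha_X$. My approach would be to compute the bigraded Hilbert function of $R/I_X^2$ in two ways---once from the primary decomposition $\bigcap_i I_{P_i}^2$, and once from a minimal bigraded free resolution of $I_X^2$---and to argue that their equality across all bidegrees forces $\alpha_X$ to be constant. A parallel Hilbert-function comparison for $I_X^{(3)}$ should then identify the staircase shape $(a,\ldots,a,b,\ldots,b)$ as the precise combinatorial obstruction to ACMness of the cube, closing the equivalence in (c).
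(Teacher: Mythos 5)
The paper itself does not prove this statement: it is quoted verbatim as Corollary 4.4 of \cite{MFO}, so there is no in-paper argument to compare yours against. Judged on its own merits, your outline has a genuine gap at its foundation. The ``general comparison of symbolic and ordinary squares for locally complete-intersection perfect height-two ideals'' that you invoke for part (a) is false as stated: three non-collinear points in $\mathbb{P}^2$, with $I=(xy,xz,yz)$, give a perfect height-two ideal that is a complete intersection at each of its minimal primes, yet $xyz\in I^{(2)}\setminus I^2$. The hypothesis that actually makes the machinery of \cite{MFO} work is local complete-intersection-ness one codimension beyond the minimal primes; for points of $\popo$ read as unions of lines in $\mathbb{P}^3$ this amounts to checking the configuration at the two points $[a_0:a_1:0:0]$ and $[0:0:b_0:b_1]$ where all lines sharing a coordinate concur (they are coplanar there, hence locally a hypersurface inside a plane, hence still CI). Your observation that each $I_{P_i}$ is generated by a $(1,0)$-form and a $(0,1)$-form only certifies the generic condition, which the $\mathbb{P}^2$ example shows is insufficient. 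The fallback of ``checking directly that pairwise products of the Ferrers-diagram generators saturate to $\bigcap_i I_{P_i}^2$'' is not a routine verification; it is the entire content of part (a) and is left undone.

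The second gap is in the implications (i)$\Rightarrow$(iii) of (b) and (i)$\Rightarrow$(iii) of (c), which you correctly flag as the main obstacle but do not overcome. A ``Hilbert-function comparison'' between $\bigcap_i I_{P_i}^2$ and a minimal resolution of $I_X^2$ presupposes knowledge of both objects, and nothing in the proposal explains why an ACM set of points with four or more minimal generators must satisfy $I_X^{(3)}\neq I_X^3$, which is exactly what is needed to close (c)(i)$\Rightarrow$(c)(ii). The route in \cite{MFO} goes through the Hilbert--Burch matrix and the Rees-algebra/linear-type analysis of codimension two ACM local complete intersections, not through bigraded Hilbert functions. The parts of your plan that do work are the easy ones: (iii)$\Leftrightarrow$(iv) in (b); (iii)$\Rightarrow$(i),(ii) via the resolution of powers of a length-two regular sequence and the Zariski--Samuel equality $I^m=I^{(m)}$ for complete intersections; and (ii)$\Leftrightarrow$(iii) in (c) via the ACM classification of \cite{GVbook}.
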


and

\begin{corollary}[Corollary 4.6, \cite{MFO}]\label{restripli}
Let $Z \subseteq \popo$ be a homogeneous set of triple
points (i.e., where every point has multiplicity three) and let  $X$ be the support of $Z$.  If $I_X$ is an almost
complete intersection with $\alpha_X =
(\underbrace{a,\ldots,a}_c,\underbrace{b,\ldots,b}_d)$,
then $I_Z$ has a bigraded minimal free resolution of the form
\[0 \rightarrow F_2 \rightarrow F_1 \rightarrow F_0 \rightarrow I_Z \rightarrow 0\]
where
\footnotesize
\begin{eqnarray*}
F_0 &  = & R(-3c-3d,0)\oplus R(-3c-2d,-b) \oplus R(-2c-2d,-a) \oplus R(-3c-d,-2b) \oplus R(-2c-d,-b-a) \oplus \\
&&R(-c-d,-2a)\oplus R(-3c,-3b) \oplus R(-2c,-2b-a) \oplus R(-c,-b-2a) \oplus R(0,-3a)\\
F_1 & = &R(-c,-3a)\oplus R(-2c,-2a-b) \oplus R(-3c,-a-2b) \oplus R(-c-d,-2a-b)   \oplus R(-2c-d,-a-2b) \oplus \\
&&R(-3c-d,-3b) \oplus R(-2c-d,-2a) \oplus R(-3c-d,-a-b)\oplus R(-2c-2d,-a-b) \oplus \\
&& R(-3c-2d,-2b) \oplus R(-3c-2d,-a) \oplus R (-3c-3d,-b)\\
F_2 & = & R(-3c-2d,-b-a) \oplus R(-3c-d,-a-2b) \oplus R(-2c-d,-2a-b).
\end{eqnarray*}
\normalsize
\end{corollary}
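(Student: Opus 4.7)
My plan is to build the resolution explicitly from the Hilbert-Burch structure of $I_X$, and then check exactness and minimality. Since $\alpha_X=(\underbrace{a,\ldots,a}_c,\underbrace{b,\ldots,b}_d)$ with $a>b$, Corollary~\ref{P1xP1 result}(c) already gives $I_Z=I_X^{(3)}=I_X^3$, so the problem is to resolve the cube of an ACI. I would write $V_1,\ldots,V_{c+d}$ and $L_1,\ldots,L_a$ for the linear forms of the vertical and horizontal rulings supporting $X$, set
\[
F'=V_1\cdots V_c,\quad F''=V_{c+1}\cdots V_{c+d},\quad G''=L_1\cdots L_b,\quad G'=L_{b+1}\cdots L_a,
\]
and take $F=F'F''$, $G=G''G'$, $H=F'G''$, of bidegrees $(c+d,0), (0,a), (c,b)$. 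These three forms generate $I_X$, and the Hilbert-Burch theorem applied to the perfect codimension-$2$ ideal $I_X$ produces exactly two first syzygies, $s_1=(G'',0,-F'')$ and $s_2=(0,F',-G')$, of bidegrees $(c+d,b)$ and $(c,a)$, coming from the identities $FG''-F''H=0$ and $F'G-G'H=0$.

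\medskip

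Since $I_Z=(F,G,H)^3$, it is generated by the ten cubic monomials $F^{i_1}G^{i_2}H^{i_3}$ with $i_1+i_2+i_3=3$; a direct bidegree count pairs these monomials one-to-one with the ten summands of $F_0$. For $F_1$, the natural candidates are the twelve products $s_k\cdot m$ with $k\in\{1,2\}$ and $m$ one of the six degree-two monomials in $F,G,H$; e.g.\ $s_1\cdot FG$ has bidegree $(c+d,b)+(c+d,0)+(0,a)=(2c+2d,a+b)$, and going through all twelve recovers the twelve summands of $F_1$. For $F_2$, one uses the Hilbert-Burch identity $s_1\wedge s_2=(F,-G,H)$ (read as the maximal minors of the $3\times 2$ syzygy matrix): multiplying through by each of $F$, $G$, $H$ produces three second syzygies of bidegrees $\deg s_1+\deg s_2+\deg f$ for $f\in\{F,G,H\}$, i.e.\ $(3c+2d,a+b),(2c+d,2a+b),(3c+d,a+2b)$, matching the three summands of $F_2$.

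\medskip

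To finish, I would check acyclicity and minimality. Minimality is automatic because every entry of both differentials is a form of strictly positive bidegree, so no unit appears in the matrices. For acyclicity, the cleanest route is to invoke the structural resolution of the $n$-th power of a Hilbert-Burch ideal (Akin-Buchsbaum-Weyman / Buchsbaum-Eisenbud), which produces the complex
\[
0\to\mathrm{Sym}^{n-2}(R^3)\otimes\Lambda^2\mathrm{Syz}(I_X)\to\mathrm{Sym}^{n-1}(R^3)\otimes\mathrm{Syz}(I_X)\to\mathrm{Sym}^{n}(R^3)\to I_X^n\to 0
\]
and specializes at $n=3$ to exactly the complex described. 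The fact that this complex has length $2$ is also consistent with Corollary~\ref{P1xP1 result}(c): $R/I_Z$ is non-ACM, so $\mathrm{pd}\,R/I_Z=3$ and the resolution of $I_Z$ has length $2$. Alternatively, I could verify acyclicity directly via the Buchsbaum-Eisenbud criterion, checking that the ideals of maximal minors of the two differentials have the expected grades, or by matching the multigraded Hilbert series of $R/I_X^{(3)}$ (computable from the grid combinatorics of the triple points) against the alternating sum arising from the proposed $F_\bullet$.

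\medskip

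The main obstacle I expect is acyclicity at $F_1$, that is, showing that the twelve candidate first syzygies and the three candidate second syzygies really generate \emph{all} syzygies at each homological step. Invoking the structural resolution above resolves this at once, but a self-contained proof must do a careful rank-and-grade bookkeeping in the Buchsbaum-Eisenbud test; this is the technical heart of the argument. A fallback is a mapping-cone approach, assembling the resolution of $I_X^3$ from known resolutions of $I_X$ and $I_X^2$ (the latter given by the square case of the same structural theorem), where one must then kill the trivial redundancies produced by the cone to obtain the minimal form.
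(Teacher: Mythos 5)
Your proposal is correct, and the bookkeeping checks out: with $G:=R(-c-d,0)\oplus R(0,-a)\oplus R(-c,-b)$ and $F:=R(-c-d,-b)\oplus R(-c,-a)$, the modules $\mathrm{Sym}^3(G)$, $F\otimes\mathrm{Sym}^2(G)$ and $\Lambda^2F\otimes G$ reproduce exactly the ten, twelve and three summands of $F_0$, $F_1$, $F_2$. But your route is genuinely different from the paper's. The paper never treats $I_Z$ as a power of $I_X$: it proves the general Theorem \ref{resolution} for arbitrary (non-homogeneous) fat ACIs by an induction that peels off one layer of multiplicity at a time, using the splitting of Propositions \ref{nicesplit}--\ref{MV}, a mapping cone, and a separate non-cancellation argument (Theorem \ref{minres}); Corollary \ref{restripli} is then read off as the case $m_{11}=m_{12}=m_{21}=3$ (equivalently, Corollary \ref{ResACI} with $m=3$). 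You instead use $I_Z=I_X^{(3)}=I_X^3$ from Corollary \ref{P1xP1 result}(c), exhibit the Hilbert--Burch matrix $\phi$ of the almost complete intersection, and invoke the structural complex $\Lambda^j(\mathrm{Syz})\otimes\mathrm{Sym}^{3-j}$ for powers of a codimension-two perfect ideal --- essentially the alternative the paper itself gestures at in the Remark following Corollary \ref{ResACI}. Your approach is shorter, resolves all powers $I_X^m$ at once, and makes minimality visible from the bidegrees alone (for $a>b\ge 1$ no summand recurs in consecutive homological degrees). Its one substantive debt is the acyclicity/linear-type hypothesis of the structural theorem, which you flag but should verify: it does hold here, since $I_1(\phi)=(Q_1,Q_2,U_1,U_2)$ has radical $(x_0,x_1,x_2,x_3)$, so $I_X$ is a local complete intersection away from the irrelevant ideal and $\mu(I_X)=3\le 4$ there; hence $I_X$ is of linear type and the complex is exact. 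The trade-off is that your method does not extend to the non-homogeneous fat ACIs of Theorem \ref{resolution}, where $I_{\mathcal{Z}}$ is not a power of $I_X$ --- which is precisely why the paper builds the layer-peeling induction instead.
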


Here, we generalize Corollary \ref{restripli} for a special set $\mathcal Z$ of fat points whose support is an almost complete intersection (ACI), i.e. for a special fat almost complete intersection. We note that we don't require that $\mathcal Z$ is homogeneous. To shorten the notation we will say $\mathcal Z$ is a fat ACI.

Let $X$ be an ACI set of distinct points in $\popo$ such that $\alpha_X =
(\underbrace{a,\ldots,a}_{\alpha_1},\underbrace{b,\ldots,b}_{\alpha_2})$
for two integers $a > b \geq 1$. Set $a:=\beta_1+\beta_2,$ $b:=\beta_1$ and
$r=\alpha_1+\alpha_2$, so that $\alpha_X =
(\underbrace{\beta_1+\beta_2,\ldots,\beta_1+\beta_2}_{\alpha_1},\underbrace{\beta_1,\dots,\beta_1}_{\alpha_2})$.

Let $H_i$ be horizontal lines of type $(1,0)$ and $V_j$ vertical lines of type $(0,1)$, then a point in $\popo$ can be denoted by $P_{ij}:=H_i\times V_j.$ If $\pi_1(X) = \{H_1,\dots,H_r\}$ and $\pi_2(X) = \{V_1, \dots,
V_a\}$,  then $X\subset W = \{P_{ij}\ |\ i=1,\ldots,r \text{ and } j=1,\ldots,a \}$. Note that $W$ is a complete
intersection of reduced points.

Define $\mathcal{Z}:=\sum w_{ij}P_{ij}$ a fat ACI
of $\mathbb{P}^1\times\mathbb{P}^1$ where 
\begin{equation}\label{fatZ}
w_{ij}=\begin{cases}
m_{11} & \text{if}\ (i,j)\le(\alpha_1, \beta_1)\\
m_{21} & \text{if}\ (\alpha_1+1, 1)\le(i,j)\le(\alpha_1+\alpha_2, \beta_1)\\
m_{12} & \text{if}\ (1, \beta_1+1)\le(i,j)\le(\alpha_1, \beta_1+\beta_2)\\
0 & \text{otherwise}\end{cases}\end{equation} for some non negative integers $m_{11},
m_{12}, m_{21}.$  Renumbering the lines $H_i$ or $V_j$, we can
always assume that $m_{21}\le m_{12}.$

The following picture shows how $\mathcal{Z}$ looks like.
\begin{center}
\begin{tikzpicture}
   \draw[thick] (0,2) --  (0,4);    
   \draw[thick] (-4,0) --  (-4,4);    
  \draw[thick] (-4,0) --  (-2,0);    
    \draw[thick] (-2,0) --  (-2,4);    
     \draw[thick] (-4,2) --  (0,2);    
    \draw[thick] (-4,4) --  (0,4);   

   \draw[decorate,decoration={brace,amplitude=4pt}](-9/2,2)--(-9/2,4);
    \node [align=left] at (-5,3)  {$\alpha_1$};    
   \draw[decorate,decoration={brace,amplitude=4pt}](-9/2,0)--(-9/2,2);
   \node [align=left] at (-5,1) {$\alpha_2$};    
   \node [align=center] at (-3,3)  {$m_{11}$};
   \node [align=center] at (-3,1)  {$m_{21}$};
   \node [align=center] at (-1,3)  {$m_{12}$};

   \draw[decorate,decoration={brace,amplitude=4pt},xshift=0pt,yshift=0pt](-4,9/2)--(-2,9/2);
   \draw[decorate,decoration={brace,amplitude=4pt}](-2,9/2)--(0,9/2);
   \node [align=center] at (-3,5)  {$\beta_{1}$};
   \node [align=center] at (-1,5)  {$\beta_{2}$};

\node [align=left] at (-7,2)  {$\mathcal Z=$};
  \end{tikzpicture}
\end{center}

We denote by $\mathcal{Z}_1:=\sum \overline{w}_{ij}P_{ij}$ a set
of fat points of $\mathbb{P}^1\times\mathbb{P}^1$ where
$$\overline{w}_{ij}=\begin{cases}
(m_{11}-1)_+ & \text{if}\ (i,j)\le(\alpha_1, \beta_1)\\
(m_{21}-1)_+ & \text{if}\ (\alpha_1+1, 1)\le(i,j)\le(\alpha_1+\alpha_2, \beta_1)\\
m_{12} & \text{if}\ (1, \beta_1+1)\le(i,j)\le(\alpha_1, \beta_1+\beta_2)\\
0 & \text{otherwise}\end{cases}$$ for $m_{11}, m_{12}, m_{21}$ as
in $\mathcal{Z}$ and $(n)_+:=max\{n,0\}.$

The main result of this paper is:
\begin{theorem}[Theorem \ref{resolution}]
Let $0\to\mathcal{L}_2\to\mathcal{L}_1\to\mathcal{L}_0\to R\to
R/I_{\mathcal Z_1}\to 0$ be a minimal free resolution of
$I_{\mathcal{Z}_1}.$ Then a minimal free resolution of a fat ACI of type (\ref{fatZ})
$I_{\mathcal{Z_{}}}$ is

$$ 0\to \bigoplus_{(a,b-\beta_1)\in \mathcal{A}_1(\mathcal{Z})} R(-a,-b)\oplus \mathcal{L}_2(0,-\beta_1)\to$$
$$ \to \bigoplus_{(a,b-\beta_1)\in \mathcal{A}_0(\mathcal{Z})} R(-a,-b)\bigoplus_{(a,b)\in \mathcal{A}_1(\mathcal{Z})} R(-a,-b)\oplus \mathcal{L}_1(0,-\beta_1)\to$$
$$\to \bigoplus_{(a,b)\in \mathcal{A}_0(\mathcal{Z})} R(-a,-b) \oplus \mathcal{L}_0(0,-\beta_1)\to I_{\mathcal{Z}}\to 0$$

Where
$\mathcal{A}_0(\mathcal{Z})=\{(\alpha_1(m_{11}+i)+\alpha_2m_{21},
((m_{12}-m_{11})_+-i)\beta_2)\ |\  i=0,\ldots,  (m_{12}-m_{11})_+
\}$
and
$\mathcal{A}_1(\mathcal{Z})=
\{(\alpha_1(m_{11}+i+1)+\alpha_2m_{21},
((m_{12}-m_{11})_+-i)\beta_2)\ |\ i=0,\ldots, (m_{12}-m_{11})_+-1
\}.$
\end{theorem}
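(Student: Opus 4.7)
The plan is to carry out a mapping cone argument built around the form $F := V_1 V_2 \cdots V_{\beta_1}$ of bidegree $(0,\beta_1)$. Set $A := H_1 \cdots H_{\alpha_1}$, $B := V_{\beta_1+1} \cdots V_{\beta_1+\beta_2}$, $C := \prod_{k=\alpha_1+1}^{\alpha_1+\alpha_2} H_k^{m_{21}}$, and $n := (m_{12}-m_{11})_+$, and for $i=0,1,\ldots,n$ introduce the candidate extra generators
$$g_i \;:=\; C \cdot A^{\,m_{11}+i} \cdot B^{\,n-i}.$$
A direct bidegree calculation shows $\deg g_i$ is exactly the $i$-th element of $\mathcal{A}_0(\mathcal{Z})$, and inspecting the vanishing order of $g_i$ at each of the three multiplicity blocks in the picture of $\mathcal{Z}$ gives $g_i \in I_{\mathcal{Z}} \cap I_{\mathcal{Z}_1}$. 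The monomial trading identities $A \cdot g_i = B \cdot g_{i+1}$ produce exactly $n$ relations among the $g_i$ whose bidegrees are the elements of $\mathcal{A}_1(\mathcal{Z})$.

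The first and main step is to prove the ideal decomposition
$$I_{\mathcal{Z}} \;=\; J + F \cdot I_{\mathcal{Z}_1}, \qquad J := (g_0, \ldots, g_n).$$
The containment $\supseteq$ is immediate from the above. For $\subseteq$ I would split any $h \in I_{\mathcal{Z}}$ according to divisibility by the $V_j$'s with $j \leq \beta_1$: parts that carry a factor of $F$ absorb into $F \cdot I_{\mathcal{Z}_1}$ (the cofactor then satisfies the vanishing prescribing $\mathcal{Z}_1$), whereas the residual part, constrained by the imposed vanishing on the $\alpha_1\times\beta_1$ and $\alpha_2\times\beta_1$ blocks, is forced into $J$; a Hilbert-function comparison provides an alternative verification. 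An associated-primes argument then shows that $F$ is a non-zero-divisor modulo $J$ (since $F \in k[x_2,x_3]$ is coprime to $B$, hence lies outside every associated prime of $J$), which yields $J \cap F \cdot I_{\mathcal{Z}_1} = FJ$ and therefore
$$Q \;:=\; I_{\mathcal{Z}} / F \cdot I_{\mathcal{Z}_1} \;\cong\; J/FJ.$$

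Next, the resolution of $Q$ is computed homologically. Since $(A,B)$ is a regular sequence of bihomogeneous forms, the ideal $(A,B)^n$ has the classical minimal free resolution $0 \to R^n \to R^{n+1} \to (A,B)^n \to 0$; twisting by the non-zero-divisor $CA^{m_{11}}$ produces a resolution of $J$ whose bigraded shifts in $R^{n+1}$ and $R^n$ are precisely $\mathcal{A}_0(\mathcal{Z})$ and $\mathcal{A}_1(\mathcal{Z})$. Because $F$ is a non-zero-divisor on $J$, the short exact sequence $0 \to J(0,-\beta_1)\xrightarrow{\cdot F} J \to J/FJ \to 0$ together with the mapping cone of $\cdot F$ on this resolution yields the minimal resolution
$$0 \to R^n(0,-\beta_1) \to R^n \oplus R^{n+1}(0,-\beta_1) \to R^{n+1} \to Q \to 0,$$
whose shifts are exactly the ones attributed to the "new" part of the resolution of $I_{\mathcal{Z}}$. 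Finally, the horseshoe lemma applied to $0 \to I_{\mathcal{Z}_1}(0,-\beta_1)\xrightarrow{\cdot F} I_{\mathcal{Z}} \to Q \to 0$ combines this with the given resolution $\mathcal{L}_\bullet(0,-\beta_1)$ of $F \cdot I_{\mathcal{Z}_1}$ to produce the claimed resolution of $I_{\mathcal{Z}}$. Minimality is automatic, since every differential in either mapping cone is multiplication by $F$, $A$, $B$, or a minimal generator of $I_{\mathcal{Z}_1}$, none of which are units.

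The principal obstacle is the initial ideal decomposition $I_{\mathcal{Z}} = J + F \cdot I_{\mathcal{Z}_1}$: all subsequent steps are routine homological algebra, but without this identification the module $Q$ cannot be recognized as $J/FJ$, and the clean Koszul-type description of the resolution breaks down.
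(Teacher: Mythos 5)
Your architecture coincides with the paper's: the decomposition $I_{\mathcal Z}=J+F\cdot I_{\mathcal Z_1}$ with $F=U_1=V_1\cdots V_{\beta_1}$ and $J=Q_1^{m_{11}}Q_2^{m_{21}}(Q_1,U_2)^{n}$, $n:=(m_{12}-m_{11})_+$, is exactly Proposition \ref{nicesplit} transported to blocks via Remark \ref{seeZ}; the identity $J\cap F\cdot I_{\mathcal Z_1}=FJ$ is Proposition \ref{nicecap}; and your horseshoe/mapping-cone step is Proposition \ref{MV} together with Remark \ref{indstep}. Your bidegree bookkeeping for $\mathcal{A}_0(\mathcal Z)$ and $\mathcal{A}_1(\mathcal Z)$ is correct, and your nonzerodivisor argument for the intersection (using $J\subseteq I_{\mathcal Z_1}$ and $F\notin \mathfrak{p}$ for every $\mathfrak{p}\in \operatorname{Ass}(R/J)$) is a clean substitute for the paper's. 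The first genuine gap is the inclusion $I_{\mathcal Z}\subseteq J+F\cdot I_{\mathcal Z_1}$, which you only sketch. A general $h\in I_{\mathcal Z}$ is not a monomial in the lines, and "splitting by divisibility by the $V_j$" does not produce a remainder that lies in $I_{\mathcal Z}$, nor a cofactor that visibly lies in $I_{\mathcal Z_1}$; the proposed Hilbert-function comparison is also not available, since $\mathcal Z$ is not ACM and computing $HF(R/(J+F I_{\mathcal Z_1}))$ essentially presupposes the decomposition. The paper obtains this containment from Proposition \ref{nicegens} --- that $I_{\W}$ is generated by products of powers of the lines --- whose proof is a nontrivial double induction on $m_{12}+m_{21}$ with ACM configurations as base cases. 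You correctly identify this as the principal obstacle, but you do not supply the argument, so the central lemma of your proof is missing.

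The second gap is the claim that minimality is automatic. In the horseshoe lemma (equivalently, the mapping cone on $0\to FJ\to F I_{\mathcal Z_1}\oplus J\to I_{\mathcal Z}\to 0$) the differentials are not only multiplication by $F$, $Q_1$, $U_2$: they also contain the comparison maps lifting $FJ\hookrightarrow F I_{\mathcal Z_1}$, namely the coefficients $c_{ij}$ in expressions $g_i=\sum_j c_{ij}h_j$ with $h_j$ the minimal generators of $I_{\mathcal Z_1}$, together with the analogous lifts at the level of first syzygies. If some $c_{ij}$ were a unit (i.e., some $g_i$ were itself a minimal generator of $I_{\mathcal Z_1}$), the summand of $F_1$ indexed by $(a,b-\beta_1)\in\mathcal{A}_0(\mathcal Z)$ would cancel against a summand of $\mathcal L_0(0,-\beta_1)$, and a similar cancellation could occur between homological degrees $2$ and $1$. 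Ruling this out is precisely the content of the paper's Theorem \ref{minres} (minimality of the combined generating set, plus a bidegree-by-bidegree induction for the overlap between $F_2$ and $F_1$), which the proof of Theorem \ref{resolution} then invokes. To close your argument you would need to show, for instance, that each $g_i$ lies in $\mathfrak{m}\cdot I_{\mathcal Z_1}$ for the bigraded irrelevant ideal $\mathfrak{m}$, and to control the bidegrees in which both lifted maps can be nonzero.
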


That is, if we set $\mu=min(m_{11},m_{21})$  recursively, we find a minimal bigraded free resolution
of non homogeneous sets of fat points $\mathcal Z_i\subset
\mathcal Z$ whose support is an almost complete intersection for all
$i=0,\dots,\mu$  but  $\mathcal Z_{\mu}$. In particular,
$\mathcal Z_0=\mathcal Z$ and the base case  $\mathcal Z_{\mu}$  can be of two types

\begin{center}
\begin{tikzpicture}
   \draw[thick] (0,2) --  (0,4);    
   \draw[thick] (-4,0) --  (-4,4);    
  \draw[thick] (-4,0) --  (-2,0);    
    \draw[thick] (-2,0) --  (-2,4);    
     \draw[thick] (-4,2) --  (0,2);    
    \draw[thick] (-4,4) --  (0,4);   

\node [align=left] at (-5,2)  {$1)$};

   \node [align=center] at (-3,3)  {$m_{11}-m_{21}$};
   \node [align=center] at (-3,1)  {$0$};
   \node [align=center] at (-1,3)  {$m_{12}$};



\node [align=left] at (3,2)  {or};
\node [align=left] at (4,2)  {$2)$};

 \draw[thick] (5,2) --  (9,2);    
   \draw[thick] (5,4) --  (9,4);    
  \draw[thick] (5,0) --  (7,0);    
    \draw[thick] (7,0) --  (7,4);    
     \draw[thick] (5,0) --  (5,4);    
    \draw[thick] (9,2) --  (9,4);   

   \node [align=center] at (6,1)  {$m_{21}-m_{11}$};
   \node [align=center] at (6,3)  {$0$};
   \node [align=center] at (8,3)  {$m_{12}$};

  \end{tikzpicture}
\end{center}
\begin{enumerate}
\item[1)] if $m_{11}> m_{21}$  then $\mathcal Z_{\mu}$ is an ACM set fat points
supported on a complete intersection $CI(\alpha_1,\beta).$ From \cite{GVbook}, Theorem 6.21 we can recover its minimal bigraded free resolution;
\item[2)] if $m_{11}< m_{21}$ then $\mathcal Z_{\mu}$ is
not ACM. In this case Lemma \ref{Start1} gives a minimal free bigraded resolution of $\mathcal Z_{\mu}$. In particular,  in this second case, the support $X$ of $\mathcal Z_{\mu}$ is the disjoint union of two complete intersections $X_1=CI(\alpha_1,\beta_2)$ and $X_2=CI(\alpha_2,\beta_1)$
\item[3)] The case $m_{11}=m_{21}$ is shown in Corollary \ref{corAlmostHom}. In this case, the support of $\mathcal Z_{\mu}$ is a $CI(\alpha_1,\beta_2)$.
\end{enumerate}

We also note that Theorem \ref{resolution} in the case $m_{11}=m_{12}=m_{21}=3$ gives Corollary \ref{restripli} proved in \cite{MFO}.

In Section \ref{powerZ}, Theorem \ref{compowerZZ}, we prove that $I_{\mathcal Z}^{(m)}=I_{\mathcal
Z}^m$ for any positive integer $m$ where $\mathcal Z$ is a fat ACI of type (\ref{fatZ}).  This result gives a new class
of non ACM set of fat points in $\popo$ whose symbolic and regular
powers are equals.

\noindent
{\bf Acknowledgments.} We gratefully acknowledge the computer algebra systems CoCoA \cite{C} and Macaulay \cite{Mt} that inspired many of the
results of this paper. We also thank the referee for his/her useful comments.

\section{Background and notation}

In this section, we recall some well-known facts about ACM sets of
fat points in $\mathbb{P}^1 \times \mathbb{P}^1.$ Then we start
the study of a set $\mathcal W$ of three non collinear fat points of $\popo.$  We observe that  $\supp(\W)$ of $\W$ is ACI but $\W$ can be either ACM or not ACM. Proposition \ref{nicegens} extends a property of the ACM set of points to our case of interest.

\begin{lemma}\label{resfatP}Let $P\in \mathbb{P}^1 \times \mathbb{P}^1$ be a point.
Then the bigraded minimal free resolution of $I(P)^m$ 
 is
$$0\to \bigoplus_{t=1}^{m}R(t-m-1,-t)\to \bigoplus_{t=0}^{m}R(t-m,-t) \to I(P)^m \to 0 $$
\end{lemma}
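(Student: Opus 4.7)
The plan is to exploit the fact that a single point $P=[a_0{:}a_1]\times[b_0{:}b_1]$ in $\popo$ has ideal $I(P)=(L_1,L_2)$ where $L_1=a_1x_0-a_0x_1$ and $L_2=b_1x_2-b_0x_3$ form a regular sequence of bidegrees $(1,0)$ and $(0,1)$. So $I(P)^m$ is generated by the $m+1$ forms
\[
f_t:=L_1^{m-t}L_2^t,\qquad t=0,\ldots,m,
\]
of bidegrees $(m-t,t)$. This immediately produces the free module $F_0=\bigoplus_{t=0}^{m}R(t-m,-t)$ mapping onto $I(P)^m$ by $e_t\mapsto f_t$.

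Next I would exhibit the obvious syzygies. For each $t=1,\ldots,m$ one has $L_1 f_t-L_2 f_{t-1}=0$. This syzygy lives in bidegree $(m-t+1,t)$, contributing a summand $R(t-m-1,-t)$ to the syzygy module; this accounts precisely for the claimed $F_1=\bigoplus_{t=1}^{m}R(t-m-1,-t)$, with the map $F_1\to F_0$ given on generators by $\varepsilon_t\mapsto L_1 e_t-L_2 e_{t-1}$.

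To show exactness I would induct on $m$. The base case $m=1$ is just the Koszul complex of the regular sequence $(L_1,L_2)$. For the inductive step I would use the short exact sequence
\[
0\longrightarrow I(P)^m\longrightarrow I(P)^{m-1}\longrightarrow I(P)^{m-1}/I(P)^m\longrightarrow 0,
\]
noting that because $(L_1,L_2)$ is a regular sequence, $I(P)^{m-1}/I(P)^m\cong \mathrm{Sym}^{m-1}(I(P)/I(P)^2)$ is free over $R/I(P)$ of rank $m$, with generators in bidegrees $(m-1-t,t)$. Resolving $R/I(P)$ by the Koszul complex and taking the mapping cone of the induction hypothesis against the free resolution of $I(P)^{m-1}/I(P)^m$ produces a (not necessarily minimal) resolution of $I(P)^m$; a degree count shows that the shifts in the mapping cone match exactly the shifts in the proposed $F_0$ and $F_1$, with no room for superfluous summands. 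Once exactness is in hand, minimality is automatic because every matrix entry is one of the linear forms $L_1,L_2$, hence has no unit component.

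The main obstacle is the bookkeeping in the mapping cone: verifying that the shifts line up so that no extraneous summands appear. An alternative, cleaner route that I would fall back on if the mapping cone becomes awkward is to observe that $I(P)^m$ is the cokernel of the first differential in the Eagon--Northcott complex associated to the $2\times(m+1)$ matrix
\[
\begin{pmatrix} L_1 & L_1 & \cdots & L_1 \\ L_2 & L_2 & \cdots & L_2 \end{pmatrix}
\]
with entries shifted suitably, or equivalently the symmetric power of the Koszul resolution of $R/I(P)$; since $R/I(P)$ has projective dimension $2$, the resolution of $I(P)^m$ has length $2$, matching the proposed complex.
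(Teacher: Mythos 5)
Your argument is correct in substance, but it takes a genuinely different route from the paper: the paper disposes of this lemma in one line by invoking Theorem 6.27 of \cite{GVbook} (the general description of minimal free resolutions of ACM fat point schemes in $\popo$, of which a single fat point is the simplest instance), whereas you give a direct, self-contained derivation from the fact that $I(P)=(L_1,L_2)$ is generated by a regular sequence of bidegrees $(1,0)$ and $(0,1)$. Your identification of the generators $f_t=L_1^{m-t}L_2^t$ of bidegree $(m-t,t)$ and of the syzygies $L_1e_t-L_2e_{t-1}$ of bidegree $(m-t+1,t)$ matches the claimed shifts exactly, and your minimality observation (all matrix entries are among $L_1,\pm L_2$) is sound. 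One caveat on the inductive step: the assertion that a degree count on the mapping cone leaves ``no room for superfluous summands'' is too optimistic. The cone of a lift of $I^{m-1}\twoheadrightarrow I^{m-1}/I^m$ against the Koszul resolution of the quotient has zeroth term of rank $3m$ (after the forced cancellation of $\mathrm{Cone}_0$) against the $m+1$ generators actually needed, so non-trivial cancellations are both permitted and required; degree bookkeeping shows only that they \emph{can} occur, not that they \emph{do}. Your fallback is the right way to close this: for a length-two regular sequence the resolution of $I^m$ is the classical symmetric-power (Eagon--Northcott type) complex $0\to R^m\to R^{m+1}\to I^m\to 0$, equivalently one verifies directly that the relations $L_1e_t-L_2e_{t-1}$ generate all syzygies of the $f_t$ (e.g.\ because a complete intersection is of linear type). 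With that substitution the proof is complete and arguably more informative than the paper's citation.
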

\begin{proof} This follows, for instance, from Theorem 6.27, \cite{GVbook}.
\end{proof}

From \cite{GVT}, Theorem 5.4 and Theorem 4.11, the following two results hold:

\begin{lemma}\label{Start2} In $\popo$ let $\mathcal{Z}$ be
    $$\mathcal{Z}:=\sum_{(1, 1)\le(i,j)\le(\alpha, \beta_1)} m_{11}P_{ij}+\sum_{(1, \beta_1+1)\le(i,j)\le(\alpha, \beta_1+\beta_2)} m_{12}P_{ij}$$
    a set of fat points whose support is $X=CI(\alpha, \beta)$
    where $\beta=\beta_1+\beta_2$.

    Set $M:=\max\{m_{11},{m_{12}}\}$, then a minimal free resolution of $I_{\mathcal{Z}}$ is
    {\Small $$0\to \bigoplus_{t=1}^{M} R(-\alpha t, -\beta_1(m_{11}-t+1)_+-\beta_2(m_{12}-t+1)_+)  \to  \bigoplus_{t=0}^{M} R(-\alpha t, -\beta_1(m_{11}-t)_+-\beta_2(m_{12}-t)_+) \to I_{\mathcal Z} \to 0$$}
\end{lemma}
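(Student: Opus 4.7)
The plan is to first recognize $\mathcal{Z}$ as an ACM fat point scheme supported on the complete intersection $X=CI(\alpha,\beta_1+\beta_2)$, so that its minimal free resolution has length exactly $2$ by the Auslander--Buchsbaum formula. The ACM property will follow from the criterion in Theorem 4.11 of \cite{GVT} (equivalently Theorem 6.21 of \cite{GVbook}): since the multiplicity profile along each horizontal line $H_i$ is the constant tuple $(m_{11},\dots,m_{11},m_{12},\dots,m_{12})$ independent of $i$, the combinatorial condition on the weighted tuple is automatic.

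Next I would exhibit an explicit set of generators. Let $F$ be the form of bidegree $(\alpha,0)$ vanishing on $H_1,\dots,H_\alpha$, let $L_1$ be the form of bidegree $(0,\beta_1)$ vanishing on $V_1,\dots,V_{\beta_1}$, and let $L_2$ be the form of bidegree $(0,\beta_2)$ vanishing on $V_{\beta_1+1},\dots,V_{\beta_1+\beta_2}$. For $t=0,1,\dots,M$ I put
\[
g_t \;:=\; F^{t}\,L_1^{(m_{11}-t)_+}\,L_2^{(m_{12}-t)_+},
\]
which has bidegree $(\alpha t,\, \beta_1(m_{11}-t)_+ + \beta_2(m_{12}-t)_+)$, matching the shifts in the claimed $F_0$. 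A local computation at each point $P_{ij}\in\supp(\mathcal Z)$ will confirm $g_t\in I_{\mathcal Z}$: at a weight $m_{11}$ point the local order of $g_t$ is $t+(m_{11}-t)_+\ge m_{11}$, and the symmetric estimate works at weight $m_{12}$ points. That the $g_t$ are in fact a minimal generating set of $I_{\mathcal Z}$, with exactly $M+1$ elements and no redundancies, is the content of Theorem 5.4 of \cite{GVT}.

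The first syzygies then come from the evident relations
\[
F\cdot g_t \;=\; L_1^{\varepsilon_1(t)}\,L_2^{\varepsilon_2(t)}\,g_{t+1},\qquad t=0,\dots,M-1,
\]
where $\varepsilon_i(t)=(m_{1i}-t)_+-(m_{1i}-t-1)_+\in\{0,1\}$. After reindexing $t\mapsto t-1$, the resulting syzygy sits in bidegree $(\alpha t,\, \beta_1(m_{11}-t+1)_+ + \beta_2(m_{12}-t+1)_+)$, which exactly reproduces the shifts of the claimed $F_1$. Since $\mathcal Z$ is ACM the resolution terminates at this stage, and minimality is automatic from the distinctness of the bidegrees appearing in each free module.

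The main obstacle is showing that the $M+1$ forms $g_t$ generate \emph{all} of $I_{\mathcal Z}$ rather than just a proper subideal sharing the same saturation; this is precisely where the structural Theorem 5.4 of \cite{GVT} is indispensable, because the local vanishing check alone only produces the inclusion $(g_0,\dots,g_M)\subseteq I_{\mathcal Z}$. With that theorem in hand, counting generators of the two ideals (together with the bidegree bookkeeping above) forces equality and yields the stated resolution.
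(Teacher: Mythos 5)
Your proposal is correct and follows essentially the same route as the paper: both arguments reduce the lemma to the Guardo--Van Tuyl results on ACM fat points in $\popo$ (Theorem 4.11 for the ACM property and Theorem 5.4 for the generators/resolution), the paper doing so by simply recording the associated tuple $\alpha_{\mathcal Z}=(\gamma_0^{\alpha},\dots,\gamma_M^{\alpha})$ with $\gamma_i=(m_{11}-i)_+\beta_1+(m_{12}-i)_+\beta_2$ and reading the resolution off it. Your added value is making the data explicit --- the generators $g_t=F^tL_1^{(m_{11}-t)_+}L_2^{(m_{12}-t)_+}$ and the bidiagonal syzygy matrix, whose maximal minors one checks recover the $g_t$ --- but the key step (that these forms generate all of $I_{\mathcal Z}$) still rests on the same cited theorem, so the logical skeleton is unchanged.
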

\begin{proof}  $\mathcal{Z}$ is ACM and the tuple associated is

$$\alpha_{\mathcal{Z}}=(\underbrace{\gamma_0,\dots,\gamma_0\  }_{\alpha},\underbrace{\gamma_1,\ \ldots\gamma_1}_{\alpha},\ \ldots,\  \underbrace{\gamma_M, \ \ldots\gamma_M }_{\alpha} ).$$

\noindent where $\gamma_i:=(m_{11}-i)_+\beta_{11}+(m_{12}-i)_+\beta_{12}$.

\end{proof}

\begin{corollary} \label{resCI} With the notation as above, if
$m_{11}=m_{12}$, i.e., $\mathcal{Z} $ is a homogeneous set of fat
points whose support is $X=CI(\alpha, \beta)$, then a minimal free
resolution    is     $$0\to \bigoplus_{i=0}^{m-1}R(-(i+1)\alpha,-(m-i)\beta)\to \bigoplus_{i=0}^{m}R(-i\alpha ,-(m-i)\beta)\to I_{\mathcal{Z}} \to 0$$
\end{corollary}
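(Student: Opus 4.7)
The plan is to derive this corollary directly by specializing Lemma \ref{Start2} to the homogeneous case $m_{11}=m_{12}=m$, so almost all the work has already been done in the parent lemma and only bookkeeping remains.

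First, I would observe that when $m_{11}=m_{12}=m$ one has $M=\max\{m_{11},m_{12}\}=m$. Next, for every index $t$ appearing in the two direct sums of Lemma \ref{Start2} one has $0\le t\le m$ (or $1\le t\le m$), so $m-t\ge 0$ and $m-t+1\ge 1$. Consequently both positive-part truncations $(m_{11}-t)_+$ and $(m_{12}-t)_+$ coincide with the plain values $(m-t)$ and $(m-t+1)$, so the plus signs can be dropped. Using $\beta_1+\beta_2=\beta$, the second components of the bidegrees simplify to
\[
-\beta_1(m-t)_+-\beta_2(m-t)_+=-\beta(m-t),\qquad -\beta_1(m-t+1)_+-\beta_2(m-t+1)_+=-\beta(m-t+1).
\]

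Finally I would reindex: in the rank-$(m+1)$ free module substitute $i=t$ so that $t=0,\dots,m$ becomes $i=0,\dots,m$, matching $\bigoplus_{i=0}^{m} R(-i\alpha,-(m-i)\beta)$; in the rank-$m$ free module substitute $i=t-1$ so that $t=1,\dots,m$ becomes $i=0,\dots,m-1$, and then $-\alpha t=-(i+1)\alpha$ and $-\beta(m-t+1)=-(m-i)\beta$, giving $\bigoplus_{i=0}^{m-1} R(-(i+1)\alpha,-(m-i)\beta)$. Assembling these two free modules into the resolution from Lemma \ref{Start2} yields the claimed resolution, and minimality is inherited from that of Lemma \ref{Start2}. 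There is no genuine obstacle here; the only thing to be careful about is the off-by-one in the reindexing, which is resolved by tracking the shift $t\mapsto t-1$ in the second syzygy module.
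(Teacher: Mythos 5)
Your proposal is correct and is exactly the argument the paper intends: the corollary is stated as an immediate specialization of Lemma \ref{Start2} to $m_{11}=m_{12}=m$ (so $M=m$, the positive parts are superfluous, and $\beta_1+\beta_2=\beta$), followed by the reindexing you describe. The paper gives no separate proof, and your bookkeeping, including the shift $i=t-1$ in the syzygy module, checks out.
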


To describe a minimal free bigraded resolution of a fat ACI $\mathcal Z$ of type (\ref{fatZ}),
we need to describe the minimal free bigraded resolution of a particular case of a fat ACI.  

We set our notation.
\begin{notation}\label{notationZ}
Let $\mathcal W$ be a fat ACI consisting only of three non collinear fat points
$P_{ij}:=H_i\times V_j$ with $H_i$ horizontal lines of type
$(1,0)$ and $V_j$ vertical lines of type $(0,1)$ for $i,j=1,2$.

We will assume $m_{21}\le m_{12}$ and  $(a)_+:=max\{a,0\}.$ Then $\mathcal
W:=m_{11}P_{11}+m_{21} P_{21}+m_{12} P_{12} $, and $\mathcal
W_1:=(m_{11}-1)_+P_{11}+ (m_{21}-1)_+ P_{21}+m_{12}P_{12}$ is the
set of points obtained from $\mathcal W$ by decreasing by 1 the
multiplicity of each point on  $V_1.$ 
\end{notation}

\begin{center}
\begin{picture}(150,110)(25,-10)
\put(0,60){$\mathcal W = $} \put(60,35){\line(0,1){45}}
\put(80,35){\line(0,1){45}}

\put(55,90){$V_1$} \put(75,90){$V_2$}

 \put(55,55){\line(1,0){45}}
 \put(55,75){\line(1,0){45}}

\put(35,50){$H_2$} \put(35,70){$H_1$}

\put(65,40){$$} \put(57.5,52){$\stackrel{\bf m_{21}}{\bullet\;
\; \; \; \; \; }$}

\put(65,60){$$} \put(57.5,72){$\stackrel{\bf m_{11}}{\bullet\;
\; \; \; \; \; }$} \put(63,78){$$}

\put(83,38){$$} \put(78,72){$\stackrel{\bf m_{12}}{\bullet\;
\; \; \; \; \; }$} \put(83,78){$$}




\end{picture}
\begin{picture}(150,110)(25,-10)
\put(0,60){$\mathcal W_1 = $} \put(60,35){\line(0,1){45}}
\put(80,35){\line(0,1){45}}

\put(55,90){$V_1$} \put(75,90){$V_2$}

\put(55,55){\line(1,0){45}} \put(55,75){\line(1,0){45}}

\put(35,50){$H_2$} \put(35,70){$H_1$}

\put(65,40){$$} \put(44.5,52){$\stackrel{_{\bf(m_{21}-1)_+}
}{\bullet\; }$}

\put(65,60){$$} \put(44.5,72){$\stackrel{_{\bf(m_{11}-1)_+
}}{\bullet\;  }$} \put(63,78){$$}

\put(83,38){$$} \put(77,72){$\stackrel{_{\bf
m_{12}}}{\bullet\; \; \; \; \; \; }$} \put(83,78){$$}




\end{picture}
\end{center}

If $m_{21}=0$ then $\W$ is an ACM set of collinear points and
everything is known (\cite{GVT}, Corollary 4.9 and Theorem 4.11).

In order to describe the homological invariants of $\W$ we start
by proving a proposition that holds for ACM finite sets of points
in $\mathbb{P}^1 \times \mathbb{P}^1,$  see for instance \cite{GVbook} Theorem
7.12.

\begin{proposition}\label{nicegens} With the notation as above, let $\W=m_{11}P_{11}+m_{21} P_{21}+m_{12} P_{12}$ be a set of three non collinear fat points in $\popo$, then $I_{\W}$ is minimally generated by a set of forms such that each of them is a product of powers of lines.
\end{proposition}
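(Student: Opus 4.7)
The plan is to reduce to the case where the four lines $H_1,H_2,V_1,V_2$ are the four variables of $R$, and then observe that the ideal in question is a monomial ideal.

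First I would note that since the three points $P_{11},P_{21},P_{12}$ use only the two horizontal lines $H_1,H_2$ and the two vertical lines $V_1,V_2$, and since $H_1,H_2$ are distinct (hence linearly independent) elements of $R_{(1,0)}$, while $V_1,V_2$ are distinct (hence linearly independent) elements of $R_{(0,1)}$, there is a bigraded $k$-algebra automorphism $\varphi$ of $R$ sending $H_1\mapsto x_0$, $H_2\mapsto x_1$, $V_1\mapsto x_2$, $V_2\mapsto x_3$. Such an automorphism preserves bigraded Betti numbers and, crucially, maps products of powers of the lines $H_1,H_2,V_1,V_2$ to monomials in $x_0,x_1,x_2,x_3$ and vice versa. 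So it suffices to prove the statement for $\varphi(I_{\mathcal W})$.

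Next I would use the fact that $I_{P_{ij}}=(H_i,V_j)$, so $I_{m_{ij}P_{ij}}=(H_i,V_j)^{m_{ij}}$, and therefore
\[
I_{\mathcal W}=(H_1,V_1)^{m_{11}}\cap(H_2,V_1)^{m_{21}}\cap(H_1,V_2)^{m_{12}}.
\]
After applying $\varphi$, each factor on the right is a monomial ideal in $k[x_0,x_1,x_2,x_3]$, generated by the monomials $x_0^ax_2^b$ with $a+b=m_{11}$, respectively $x_1^ax_2^b$ with $a+b=m_{21}$, and $x_0^ax_3^b$ with $a+b=m_{12}$. The intersection of monomial ideals in a polynomial ring is again a monomial ideal, and a monomial ideal has a unique minimal generating set consisting of monomials. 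Hence $\varphi(I_{\mathcal W})$ is minimally generated by monomials in $x_0,x_1,x_2,x_3$, and pulling back through $\varphi^{-1}$ gives a minimal generating set of $I_{\mathcal W}$ whose elements are products of powers of $H_1,H_2,V_1,V_2$.

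There is no real obstacle here; the only thing to check carefully is that $\varphi$ exists as a bigraded automorphism, which follows because the bigrading is preserved on each factor of the change of coordinates and the map is invertible on each of the two-dimensional pieces $R_{(1,0)}$ and $R_{(0,1)}$. The argument makes no use of $\mathcal W$ being ACM, so it applies uniformly whether or not $m_{21}=0$, and whether or not $\mathcal W$ is Cohen--Macaulay.
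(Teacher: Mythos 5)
Your proof is correct, and it takes a genuinely different route from the paper's. The paper proves the statement by writing down the candidate generating set
$\mathcal{G}(\W)=\{H_1^{a_1}H_2^{a_2}V_1^{b_1}V_2^{b_2}\ :\ a_1+b_2\ge m_{12},\ a_2+b_1\ge m_{21},\ a_1+b_1\ge m_{11}\}$
and showing it generates $I_{\W}$ by a fairly delicate argument: expand an arbitrary $F\in I_{\W}$ first along $(H_1,V_2)^{m_{12}}$, then along $(H_2,V_1)^{m_{21}}$, split off the part already visibly in $(\mathcal{G}(\W))$, and handle the remainder by induction on $m_{12}+m_{21}$ together with appeals to ACM-ness of auxiliary fat point schemes in the base cases. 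Your argument instead observes that $H_1,H_2$ and $V_1,V_2$ are bases of $R_{(1,0)}$ and $R_{(0,1)}$, so a bigraded change of coordinates turns $I_{\W}=(H_1,V_1)^{m_{11}}\cap(H_2,V_1)^{m_{21}}\cap(H_1,V_2)^{m_{12}}$ into an intersection of monomial ideals, and then invokes the standard facts that intersections of monomial ideals are monomial and that a monomial ideal is minimally generated by monomials (graded Nakayama in the fine $\Z^4$-grading). This is shorter, less error-prone, and it even recovers the explicit description $\mathcal{G}(\W)$ for free, since a monomial lies in $(x_0,x_2)^m$ exactly when the sum of its $x_0$- and $x_2$-exponents is at least $m$; the identification $I_{m_{ij}P_{ij}}=(H_i,V_j)^{m_{ij}}$ that you use is the same complete-intersection fact the paper cites in Lemma 3.1. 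The one thing the paper's hands-on computation buys is that it is phrased so as to be transported (Remark 3.3) to the setting where the linear forms $H_i,V_j$ are replaced by the products $Q_i,U_j$ of many lines; there the $Q_i,U_j$ are no longer coordinates, so your change-of-variables step would have to be replaced by a separate argument (for instance a suitable flatness or regular-sequence argument for $k[Q_1,Q_2,U_1,U_2]\subset R$), whereas the paper's manipulations are at least formally stated in terms of the forms themselves.
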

\begin{proof} We claim that $I_{\W}$ is generated by the set of bihomogeneous forms $$\mathcal{G}(\W)=\{H_1^{a_1}H_2^{a_2}V_1^{b_1}V_2^{b_2}\ |\ a_1+b_2\ge m_{12},\ a_2+b_1\ge m_{21},\ a_1+b_1\ge m_{11} \}.$$
It is easy to check that $H_1^{a_1}H_2^{a_2}V_1^{b_1}V_2^{b_2}\in \mathcal{G}(\W) $ iff $H_1^{a_1}H_2^{a_2}V_1^{b_1}V_2^{b_2}\in I_{\W}.$ 
On the other hand, we distinguish the following cases:
\begin{enumerate}
\item  If either $m_{12}=0$ or $m_{21}=0,$ then $\W$ is ACM and so
the statement is true.

\item Suppose $m_{12}>0$ and $m_{21}>0$ and let $F\in I_{\W}$ be a
bihomogeneous form of bidegree $(a,b).$  Since $F\in
(H_1,V_2)^{m_{12}}$ we get $F=\sum_i Q_i H_1^i V_2^{m_{12}-i}$
where either $Q_i=0$ or $\deg(Q_i)=(a-i,b-m_{12}+i).$ Moreover
$F\in (H_2,V_1)^{m_{21}}$ but $H_1^i V_2^{m_{12}-i}\notin  (H_2,V_1)^{m_{21}}$, and,
since $I_{\W}$ is bihomogeneous,  $Q_i$ have to belong to
$(H_2,V_1)^{m_{21}}$ for each $i,$ that means $Q_i=\sum_j T_{ij}
H_2^{m_{21}-j} V_1^j.$ Therefore   $$F=\sum_i\sum_j
T_{ij}H_2^{m_{21}-j} V_1^j H_1^i V_2^{m_{12}-i}=$$
$$=\underbrace{\sum_{i+j < m_{11} }T_{ij} H_1^iV_1^j H_2^{m_{21}-j}  V_2^{m_{12}-i}}_{F'}+\underbrace{\sum_{i+j \ge m_{11}} T_{ij} H_1^iV_1^j H_2^{m_{21}-j}  V_2^{m_{12}-i}}_{F^*}.$$
Note that $F^*\in (\mathcal{G}(\W)),$ so the claim follows if we also prove that
$F'\in (\mathcal{G}(\W)).$ Then
\begin{itemize}
	\item[i)] if $m_{11}=0$ we get $F'=0$ and we are done;
	\item[ii)] if $m_{11}>0,$ we proceed by induction on $s:=m_{12}+m_{21}.$ If $s\le m_{11} +1$ then $\W$ is ACM, by Theorem 4.8 in \cite{GVT}, and the statement is true. Suppose $s>m_{11}+1.$ 
	Denoted by $w_1=\min\{m_{12},m_{11}-1\},$ and by $w_2=\min\{m_{21},m_{11}-1\}$ then $$F'=H_2^{m_{21}-w_2}V_2^{m_{12}-w_1}\cdot\underbrace{\sum_{i+j < m_{11} }T_{ij} H_1^i V_1^j H_2^{w_2-j} V_2^{w_1-i}}_{F''}.$$ From $F'\in (H_1,V_1)^{m_{11}}$ we have $F''\in (H_1,V_1)^{m_{11}}.$ If $m_{12}>m_{11}-1,$ then $w_1+w_2<s$ and $F''\in I(\W'')$ where $\W''=m_{11}P_{11}+w_1P_{12}+ w_2P_{21}.$ By inductive hypothesis, the forms in  $\mathcal{G}(\W'')$ generate $I_{\W''}$ and, for some bihomogeneus polynomial $C_l,$ $F''=\sum C_l  H_1^{a_1}H_2^{a_2}V_1^{b_1}V_2^{b_2}.$ Then$$F'=\sum C_l H_1^{a_1}H_2^{a_2+m_{21}-w_2}V_1^{b_1}V_2^{b_2+m_{12}-w_1}$$ with the exponents satisfying the systems below

\[ \begin{cases}
    a_1+b_1\ge m_{11}&\\
    a_1+b_2\ge w_1&\\
    a_2+b_1\ge w_2&
\end{cases} \text{and then }\  \begin{cases}
    a_1+b_1\ge m_{11}&\\
    a_1+b_2+m_{12}-w_1 \ge m_{12}&\\
    a_2+b_1+m_{21}-w_2\ge m_{21}&\\
\end{cases}
\]

as we need.

In order to conclude the proof, we have to
 consider $m_{12}<m_{11}<s-1.$ In this case, note
that $F'\in I(\hat{\W}),$ where
$\hat{\W}=m_{11}P_{11}+m_{12}P_{12}+ m_{21}P_{21}+
(s-m_{11}-1)P_{22}$ that is an ACM set of points, by \cite{GVT} Theorem 4.8. So $F'\in (\mathcal{G}(I_{\W}))$.
\end{itemize}
\end{enumerate}
\end{proof}
\begin{notation} From now on we will denote by $\mathcal{G}(I_{\W})$ a minimal set of generators of $I_{\W}$ as in Proposition \ref{nicegens}.
\end{notation}


 The next results are immediate consequences of Proposition \ref{nicegens}. Since $I_{\W_1}$ is still in the hypothesis of Proposition \ref{nicegens}, it suffices to prove them just for the product of powers of $H_1,H_2,V_1$ and $V_2.$

\begin{proposition}\label{nicesplit}With the notation as above, $$I_{\W}=V_1I_{\W_1}+H_1^{m_{11}}H_2^{m_{21}}\cdot(H_1,V_2)^{(m_{21}-m_{11})_+}$$
\end{proposition}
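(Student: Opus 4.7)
By Proposition \ref{nicegens} both $I_\W$ and $I_{\W_1}$ are minimally generated by monomials of the form $H_1^{a_1}H_2^{a_2}V_1^{b_1}V_2^{b_2}$, each characterized by three explicit linear inequalities on the exponents. My approach is to establish both inclusions in the asserted equality by testing on these monomial generators, split according to whether $b_1 \geq 1$ (divisible by $V_1$) or $b_1 = 0$.

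\paragraph{Inclusion $V_1 I_{\W_1} + H_1^{m_{11}}H_2^{m_{21}}(H_1,V_2)^{(m_{21}-m_{11})_+} \subseteq I_\W$.} Any monomial generator of $V_1 I_{\W_1}$ has the form $V_1 \cdot G$, where $G$ satisfies the three inequalities of Proposition \ref{nicegens} for $\W_1$ (right-hand sides $m_{12}$, $(m_{21}-1)_+$, $(m_{11}-1)_+$). Multiplying by $V_1$ raises the $V_1$-exponent by one; since $(m-1)_+ + 1 \geq m$, the resulting exponents satisfy the three inequalities for $\W$, so $V_1 G \in I_\W$. For the second summand, I would verify directly that each generator $H_1^{m_{11}+i}H_2^{m_{21}}V_2^{c-i}$, with $c = (m_{21}-m_{11})_+$ and $0 \leq i \leq c$, satisfies the three $\W$-inequalities, invoking the standing hypothesis $m_{21} \leq m_{12}$ at the step bounding $a_1 + b_2$.

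\paragraph{Inclusion $I_\W \subseteq V_1 I_{\W_1} + H_1^{m_{11}}H_2^{m_{21}}(H_1,V_2)^{(m_{21}-m_{11})_+}$.} Conversely, take any monomial generator $G = H_1^{a_1}H_2^{a_2}V_1^{b_1}V_2^{b_2}$ of $I_\W$ from Proposition \ref{nicegens}. If $b_1 \geq 1$, subtract one from the $V_1$-exponent; since $m \geq (m-1)_+$, the three inequalities for $\W$ imply the analogous inequalities for $\W_1$, so $G/V_1 \in I_{\W_1}$ and $G \in V_1 I_{\W_1}$. If $b_1 = 0$, the constraints force $a_1 \geq m_{11}$ and $a_2 \geq m_{21}$, so $G = H_1^{m_{11}}H_2^{m_{21}} \cdot M$ with $M = H_1^{a_1-m_{11}}H_2^{a_2-m_{21}}V_2^{b_2}$; it then remains to verify that $M \in (H_1,V_2)^{c}$, equivalently $(a_1-m_{11}) + b_2 \geq c$.

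\paragraph{Main obstacle.} The crux is the arithmetic in the case $b_1 = 0$ of the second inclusion, and symmetrically in verifying the generators of the second summand in the first inclusion. When $m_{21} \leq m_{11}$, $c = 0$ and the condition $(a_1-m_{11}) + b_2 \geq 0$ is automatic. When $m_{21} > m_{11}$, the hypothesis $m_{21} \leq m_{12}$ together with $a_1 + b_2 \geq m_{12}$ yields $a_1 + b_2 \geq m_{21}$, i.e., $(a_1 - m_{11}) + b_2 \geq m_{21} - m_{11} = c$, as required. Once this interplay between $m_{11}, m_{21}, m_{12}$ is nailed down, the remainder is routine linear arithmetic on the exponents, fully guided by the monomial description of Proposition \ref{nicegens}.
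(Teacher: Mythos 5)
Your overall strategy---reduce both inclusions to the monomial generators supplied by Proposition \ref{nicegens} and do linear arithmetic on the exponents---is exactly what the paper intends (it gives no proof beyond declaring these propositions immediate consequences of Proposition \ref{nicegens}). The parts of your argument concerning $V_1 I_{\W_1}$ and the case $b_1\ge 1$ of the forward inclusion are correct.

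The gap is in your verification that the second summand is contained in $I_\W$. With $c=(m_{21}-m_{11})_+$ as in the displayed statement, the generator $H_1^{m_{11}+i}H_2^{m_{21}}V_2^{c-i}$ has $a_1+b_2=m_{11}+c=\max\{m_{11},m_{21}\}$, and the standing hypothesis $m_{21}\le m_{12}$ points the wrong way: it yields $\max\{m_{11},m_{21}\}\le\max\{m_{11},m_{12}\}$, not the required $a_1+b_2\ge m_{12}$. Concretely, for $m_{11}=1$, $m_{21}=2$, $m_{12}=5$ the $i=0$ generator is $H_1H_2^{2}V_2$, which vanishes only to order $2$ at $P_{12}$ and hence is not in $I_\W$; the equality is false as literally printed. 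The exponent must be $(m_{12}-m_{11})_+$, as it is in Propositions \ref{nicecap} and \ref{MV} and in Remark \ref{indstep}, and as one sees by listing the minimal generators of $I_\W$ with $b_1=0$. With that exponent your computation becomes $a_1+b_2=m_{11}+(m_{12}-m_{11})_+=\max\{m_{11},m_{12}\}\ge m_{12}$, with no appeal to $m_{21}\le m_{12}$ needed, and the rest of your outline goes through (in the case $b_1=0$ of the reverse inclusion, $a_1+b_2\ge m_{12}$ together with $a_1\ge m_{11}$ gives $(a_1-m_{11})+b_2\ge(m_{12}-m_{11})_+$ directly). So: right method, but the key arithmetic step fails as written; the inconsistency in the exponent should have been flagged rather than papered over by invoking $m_{21}\le m_{12}$.
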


\begin{proposition}\label{nicecap}With the notation as above,
$$V_1I_{\W_1}\cap H_1^{m_{11}}H_2^{m_{21}}\cdot(H_1,V_2)^{(m_{12}-m_{11})_+} =V_1H_1^{m_{11}}H_2^{m_{21}}\cdot(H_1,V_2)^{(m_{12}-m_{11})_+}$$
\end{proposition}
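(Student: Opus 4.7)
The plan is to reduce everything to a monomial computation in the variables $H_1,H_2,V_1,V_2$ (which form a $k$-basis of the linear forms of $R$, since $H_1,H_2$ span the forms of degree $(1,0)$ and $V_1,V_2$ span the forms of degree $(0,1)$). Once this is done, the identity becomes a simple comparison of exponent conditions.

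First I would invoke Proposition \ref{nicegens} applied to both $\W$ and $\W_1$ (which, by Notation \ref{notationZ}, is another triple of non-collinear fat points satisfying the proposition's hypotheses) to conclude that $I_{\W_1}$ is generated by forms of the shape $H_1^{a_1}H_2^{a_2}V_1^{b_1}V_2^{b_2}$. Then $V_1 I_{\W_1}$ and $H_1^{m_{11}}H_2^{m_{21}}\cdot(H_1,V_2)^{(m_{12}-m_{11})_+}$ are both monomial ideals in $k[H_1,H_2,V_1,V_2]$, and so is their intersection; hence equality of the two ideals can be tested on monomial generators.

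Next I would verify the easy containment $\supseteq$. The inclusion into $H_1^{m_{11}}H_2^{m_{21}}\cdot(H_1,V_2)^{(m_{12}-m_{11})_+}$ is immediate, and the inclusion into $V_1 I_{\W_1}$ reduces to showing $H_1^{m_{11}}H_2^{m_{21}}\cdot(H_1,V_2)^{(m_{12}-m_{11})_+}\subseteq I_{\W_1}$; this is a direct check on the generators $H_1^{m_{11}+k}H_2^{m_{21}}V_2^{(m_{12}-m_{11})_+-k}$, using that this monomial vanishes to order $\geq (m_{11}-1)_+$ on $H_1$ (hence at $P_{11}$), to order $\geq (m_{21}-1)_+$ on $H_2$ (hence at $P_{21}$), and to order $m_{11}+(m_{12}-m_{11})_+\geq m_{12}$ at $P_{12}=H_1\cap V_2$.

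For the nontrivial containment $\subseteq$, I would take an arbitrary monomial $M=H_1^{a_1}H_2^{a_2}V_1^{b_1}V_2^{b_2}$ in the intersection and read off the exponent inequalities. Membership in $H_1^{m_{11}}H_2^{m_{21}}\cdot(H_1,V_2)^{(m_{12}-m_{11})_+}$ gives $a_1\geq m_{11}$, $a_2\geq m_{21}$ and $(a_1-m_{11})+b_2\geq (m_{12}-m_{11})_+$, while membership in $V_1 I_{\W_1}$ forces $b_1\geq 1$. These four inequalities are exactly the conditions defining the monomial ideal $V_1 H_1^{m_{11}}H_2^{m_{21}}\cdot(H_1,V_2)^{(m_{12}-m_{11})_+}$, so $M$ lies in it.

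The only delicate point is the reduction to monomial ideals, which hinges on Proposition \ref{nicegens}; after that, the proof is purely combinatorial on exponents and no further input is needed.
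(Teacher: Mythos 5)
Your proof is correct and follows the same route the paper intends: the paper dispatches this proposition with the single remark that, by Proposition \ref{nicegens}, it suffices to check the identity on products of powers of $H_1,H_2,V_1,V_2$. Your reduction to monomial ideals and the subsequent exponent bookkeeping (the four inequalities $a_1\ge m_{11}$, $a_2\ge m_{21}$, $(a_1-m_{11})+b_2\ge (m_{12}-m_{11})_+$, $b_1\ge 1$) is exactly the verification the paper leaves implicit, carried out correctly.
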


The following proposition will give us a way to construct a free
resolution of $I_{\W}.$

\begin{proposition}\label{MV} The following sequence is exact:
\Small $$0\to
V_1H_1^{m_{11}}H_2^{m_{21}}\cdot(H_1,V_2)^{(m_{12}-m_{11})_+} \to
V_1I_{\W_1}\oplus
H_1^{m_{11}}H_2^{m_{21}}\cdot(H_1,V_2)^{(m_{12}-m_{11})_+} \to
I_{\W} \to 0 $$
\end{proposition}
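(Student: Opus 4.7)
The plan is to recognize this as a standard Mayer--Vietoris short exact sequence of the form
$$0\to A\cap B\xrightarrow{\phi} A\oplus B\xrightarrow{\psi} A+B\to 0,$$
applied to the two ideals $A:=V_1 I_{\W_1}$ and $B:=H_1^{m_{11}}H_2^{m_{21}}\cdot(H_1,V_2)^{(m_{12}-m_{11})_+}$.

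First I would define the maps explicitly: take $\phi(f)=(f,-f)$ and $\psi(a,b)=a+b$. With these choices the composition $\psi\circ\phi$ is identically zero, and $\phi$ is visibly injective, so these two pieces of exactness require no further work. The surjectivity of $\psi$ at $I_{\W}$ is precisely the content of Proposition \ref{nicesplit}, which asserts that $A+B=I_{\W}$, so I simply invoke it. The remaining and only substantive step is to identify $\ker\psi$ with $\operatorname{im}\phi$: if $(a,b)\in A\oplus B$ satisfies $a+b=0$, then $a=-b$ lies in $A\cap B$, and by Proposition \ref{nicecap} this intersection is exactly the leftmost module $V_1 H_1^{m_{11}}H_2^{m_{21}}\cdot(H_1,V_2)^{(m_{12}-m_{11})_+}$. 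Hence $(a,b)=\phi(a)$ is in the image of $\phi$, and conversely every element of $\operatorname{im}\phi$ is in $\ker\psi$ by construction.

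I do not foresee any real obstacle: the proposition is essentially a formal consequence of the two preceding propositions together with the standard Mayer--Vietoris recipe. The only point worth stating carefully is that the maps $\phi$ and $\psi$ are bihomogeneous of degree $0$ once $A$, $B$, and $A\cap B$ are given the bidegrees they inherit as submodules of $R$ (and the direct sum is taken in the category of bigraded $R$-modules), so the resulting sequence is a short exact sequence of finitely generated bigraded $R$-modules, as required for later use in building a minimal free resolution of $I_{\W}$ from those of $I_{\W_1}$ and of $(H_1,V_2)^{(m_{12}-m_{11})_+}$.
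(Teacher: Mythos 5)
Your proof is correct and is essentially identical to the paper's: the authors likewise invoke the Mayer--Vietoris sequence $0\to I\cap J\to I\oplus J\to I+J\to 0$ together with Propositions \ref{nicesplit} and \ref{nicecap} to identify the sum and the intersection. Your version merely spells out the maps $\phi$, $\psi$ and the exactness checks that the paper leaves implicit.
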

\begin{proof} This follows from the exact sequence
$$0\to I\cap J \to I\oplus J \to I+J\to 0$$
(where $I,J$ are R-modules), Proposition \ref{nicesplit} and Proposition \ref{nicecap}.
\end{proof}

\begin{remark}\label{indstep} As a consequence of Proposition \ref{MV} and the mapping cone
construction, if $0\to
L_2\to L_1\to L_0$ is a minimal free resolution of $I_{\W_1}$
then, it is easy to compute a free resolution for $I_{\W}$ is {\Small
$$ 0\to \bigoplus_{(a,b)\in A_2(\W)}R(-a,-b)\oplus L_2(0,-1) \to$$
$$\to  \bigoplus_{(a,b)\in A_1(\W)}R(-a,-b)^2
\oplus R(-m_{11}-m_{21},-(m_{12}-m_{11})_+-1  )
\oplus L_1(0,-1)\to $$
\begin{equation}\label{res}\to \bigoplus_{(a,b)\in A_0(\W)}R(-a,-b)\oplus L_0(0,-1) \to I_{\W}\to 0 \end{equation}
}

where

{\Small
\begin{tabular}{l}
$A_0(\W):=\{(a,b)\ |\ a+b= m_{11}+m_{21}+ (m_{12}-m_{11})_+\ \ \text{and}\ \ 0\le b\le (m_{12}-m_{11})_+  \}$\\
$A_1(\W):=\{(a,b)\ |\ a+b=1+ m_{11}+m_{21}+ (m_{12}-m_{11})_+\ \text{and}\ 1 \le b\le (m_{12}-m_{11})_+ \}$\\
$A_2(\W):=\{(a,b)\ |\ a+b=2+ m_{11}+m_{21}+ (m_{12}-m_{11})_+\ \text{and}\ 2 \le b\le (m_{12}-m_{11})_+ +1 \}$\\
\end{tabular}}

We will show in Theorem \ref{minres} that the resolution will be minimal.
\end{remark}

From Remark \ref{indstep} we can describe the bigraded Betti numbers of
$I_{\W}$ when $m_{11}=0,$ i.e. $\W$ is a non ACM set of two non collinear
fat points. We note that in this case the support of $\W$ is not an ACI.
\begin{lemma}\label{startcaso1}
Let $\W=m_{12}P_{12}+m_{21}P_{21}$ be a set of two non collinear
fat points, then the minimal free resolution of $I_{\W}$ is:

    $$0\to \bigoplus_{_{\begin{array}{c}_{a+b=m_{12}+ m_{21}+2}\\ _{a,b\ge 2}\end{array} }}  R(-a,-b)^{\beta_{2}(a,b)} \to\bigoplus_{_{\begin{array}{c}_{a+b=m_{12}+ m_{21}+1}\\ _{ a,b\ge 1}\end{array} }} R(-a,-b)^{\beta_{1}(a,b)}\to$$
    $$\to\bigoplus_{_{\begin{array}{c}_{a+b=m_{12}+ m_{21}}\\ _{a,b\ge 0}\end{array} }}  R(-a,-b)^{\beta_{0}(a,b)}\to I_{\W}\to 0 $$

\noindent where $\beta_0(a,b):=\min\{a,b, m_{21}\}+1,$ $\beta_1(a,b):=\min\{a,b-1, m_{21}\}+\min\{a-1,b, m_{21}\}+1,$ and
$\beta_2(a,b):=\min\{a,b, m_{21}\}$.

\end{lemma}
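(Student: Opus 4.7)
The plan is to induct on $m_{21}$, invoking Remark \ref{indstep} at each step with $m_{11}=0$. For the base case $m_{21}=0$, $\W$ collapses to the single fat point $m_{12}P_{12}$, so Lemma \ref{resfatP} yields a minimal resolution with $\beta_0(a,b)=1$ whenever $a+b=m_{12}$ and $\beta_1(a,b)=1$ whenever $a+b=m_{12}+1,\ a,b\ge 1$; these agree with $\min\{a,b,0\}+1$ and $\min\{a,b-1,0\}+\min\{a-1,b,0\}+1$, while $\beta_2\equiv 0$.

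For the inductive step, set $\W_1 = m_{12}P_{12} + (m_{21}-1)P_{21}$. This is again a set of two non collinear fat points, still satisfying $m_{21}-1\le m_{12}$, so the inductive hypothesis supplies a minimal resolution $L_\bullet$ of $I_{\W_1}$ with the claimed Betti numbers (using $m_{21}-1$ in place of $m_{21}$). Remark \ref{indstep} then assembles a free resolution $F_\bullet$ of $I_\W$ whose term at homological degree $i$ decomposes as $L_i(0,-1)\oplus K_i \oplus M_{i-1}$, where $K_\bullet$ and $M_\bullet$ are the resolutions of $H_2^{m_{21}}(H_1,V_2)^{m_{12}}$ and $V_1 H_2^{m_{21}}(H_1,V_2)^{m_{12}}$, both obtained from Lemma \ref{resfatP} by shifting by $(-m_{21},0)$ and $(-m_{21},-1)$ respectively.

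The crucial observation is that this mapping cone is already a minimal resolution. Indeed, the internal differentials of $L_\bullet, K_\bullet, M_\bullet$ have entries in the maximal ideal $\mathfrak{m}=(H_1,H_2,V_1,V_2)$ because each is minimal, and the connecting map $\phi:M_\bullet\to L_\bullet(0,-1)\oplus K_\bullet$ lifts the inclusion $M\hookrightarrow V_1 I_{\W_1}\oplus K$ coming from Proposition \ref{MV}. Since $M=V_1\cdot\bigl(H_2^{m_{21}}(H_1,V_2)^{m_{12}}\bigr)$ sits inside $V_1 I_{\W_1}$ by multiplication by $H_2^{m_{21}}$ and inside $K$ by multiplication by $V_1$, each block of $\phi$ is scalar multiplication by $H_2^{m_{21}}$ or $-V_1$; both lie in $\mathfrak{m}$ since $m_{21}\ge 1$. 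Consequently no entry of the cone differential is a unit, and no cancellation occurs.

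It remains to verify that the bigraded Betti numbers of $F_\bullet$ match the stated formulas. At bidegree $(a,b)$ with $a+b=m_{12}+m_{21}+i$, one adds the contribution from the $A_i(\W)$-indexed part (which is one copy of $R(-a,-b)$ whenever $(a,b)$ lies in the appropriate range) to the shifted contribution from $L_\bullet(0,-1)$ (known by induction to be $\min\{a,b-1,m_{21}-1\}+1$ at level $0$, and analogous expressions at levels $1$ and $2$). A short case analysis on where $b$ falls relative to $m_{21}$ and $m_{12}$, exploiting $m_{21}\le m_{12}$, collapses each sum into the desired $\min$-formula; for instance at level $0$ one checks by splitting into the subcases $b\le m_{21}$, $m_{21}\le b\le m_{12}$, and $b> m_{12}$ that the total always equals $\min\{a,b,m_{21}\}+1$. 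This combinatorial bookkeeping is the main technical step, but it becomes routine once minimality of the mapping cone is secured.
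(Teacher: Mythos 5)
Your proposal is correct and follows essentially the same route as the paper: induction on $m_{21}$ with Lemma \ref{resfatP} as the base case, the mapping cone of Remark \ref{indstep} (applied with $m_{11}=0$) for the inductive step, minimality of the cone, and then the case analysis that collapses the resulting sums into the stated $\min$-formulas. One caveat: your description of the block $M_\bullet\to L_\bullet(0,-1)$ of the connecting map as ``scalar multiplication by $H_2^{m_{21}}$'' is not literally correct for $m_{21}\ge 2$ --- the generators of $M=V_1H_2^{m_{21}}(H_1,V_2)^{m_{12}}$ sit in total degree $m_{12}+m_{21}+1$ while the generators of $V_1I_{\W_1}$ sit in total degree $m_{12}+m_{21}$, so the entries of that block are linear forms rather than $H_2^{m_{21}}$; the conclusion that no entry is a unit still holds, and the paper obtains it more directly from the observation that each term $F_i$ of the cone is concentrated on the single antidiagonal $a+b=m_{12}+m_{21}+i$, so consecutive terms never share a bidegree and no cancellation is numerically possible.
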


\begin{proof} If $m_{21}=0$ then $\W$ consists of only one fat point and the statement is true by Lemma \ref{resfatP}.
Let us suppose $m_{21}>0$ and the statement true for $\W_1.$
From Remark \ref{indstep} we get that no
cancellation is numerically allowed in the resolution arising from the mapping cone construction, then by inductive hypothesis

{\Small
\begin{tabular}{l}
$\beta_{0}(a,b) =\begin{cases} \min\{ m_{21}-1+ m_{12}-(b-1), b-1, m_{21}-1\}+2 & \text{if}\ a+b-1= m_{21}-1+ m_{12},\; b\le m_{12} \\
                               \min\{ m_{21}-1+ m_{12}-(b-1), b-1, m_{21}-1\}+1 & \text{if}\ a+b-1= m_{21}-1+ m_{12},\; b> m_{12} \\
                                                               0 &  \text{otherwise}
                                \end{cases}$

\end{tabular}\\

\begin{tabular}{l}
$ =\begin{cases} \min\{ b, m_{21}\}+1 & \text{if}\ a+b= m_{21}+ m_{12},\; b\le m_{12} \\
a+1 & \text{if}\ a+b= m_{21}+ m_{12},\; b> m_{12} \\
0 &  \text{otherwise}
\end{cases}$
\end{tabular}
\begin{tabular}{l}
    $ =\begin{cases} \min\{a, b, m_{21}\}+1 & \text{if}\ a+b= m_{21}+ m_{12} \\
    0 &  \text{otherwise}
    \end{cases}$
\end{tabular}

}
as required.

Analogously we can compute $\beta_{1}(a,b)$ and $\beta_{2}(a,b).$
\end{proof}


\begin{theorem}\label{minres}Let $0\to F_2\to F_1\to F_0$ be the free resolution of $I_{\W}$ as in Remark \ref{indstep}, then no cancellation is  allowed.
\end{theorem}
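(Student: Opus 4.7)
The plan is to induct on $\mu = \min\{m_{11}, m_{21}\}$. For the base case $\mu = 0$, two subcases arise. If $m_{21}=0$, then $\W = m_{11}P_{11}+m_{12}P_{12}$ is an ACM pair of collinear fat points, and the resolution of Lemma \ref{Start2} (with $\alpha=1$) is minimal and coincides with the one in Remark \ref{indstep}. If instead $m_{11}=0$, Lemma \ref{startcaso1} exhibits the minimal resolution directly, with strictly positive Betti numbers in each prescribed bidegree, again matching Remark \ref{indstep}; in both cases, minimality is immediate.

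For the inductive step $\mu \geq 1$, I would assume that the mapping-cone resolution $L_\bullet$ of $I_{\W_1}$ from Remark \ref{indstep} is minimal, and then argue that the corresponding resolution (\ref{res}) for $I_\W$ is minimal as well. Each module $F_i$ splits as a \emph{new} part (the shifts coming from $A_i(\W)$, together with, at level $1$, the extra summand $R(-m_{11}-m_{21},-d-1)$, where $d = (m_{12}-m_{11})_+$) and an \emph{inherited} part $L_i(0,-1)$. Since all new shifts at level $i$ lie in the single total bidegree $m_{11}+m_{21}+d+i$, no new-versus-new cancellation between consecutive $F_i$ can occur; inherited-versus-inherited cancellations are ruled out by the inductive hypothesis. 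Only new-versus-inherited coincidences remain.

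The main obstacle is precisely this new-versus-inherited comparison. I would use Proposition \ref{nicegens} to write every minimal generator of $I_{\W_1}$ as a monomial $H_1^{a_1}H_2^{a_2}V_1^{b_1}V_2^{b_2}$ whose exponents saturate the inequalities $a_1+b_1 \geq m_{11}-1$, $a_2+b_1 \geq m_{21}-1$, $a_1+b_2 \geq m_{12}$. This gives explicit bidegrees $(a_1+a_2,\,b_1+b_2)$ for shifts in $L_0$, hence $(a_1+a_2,\,b_1+b_2+1)$ in $L_0(0,-1)$; an analogous description holds for first syzygies contributing to $L_1$. Splitting into the two regimes $m_{12}\geq m_{11}$ and $m_{12}<m_{11}$, one checks that a coincidence with any new shift in $A_1(\W) \cup \{(m_{11}+m_{21},d+1)\}$ (for the $F_0$--$F_1$ comparison) or in $A_2(\W)$ (for the $F_1$--$F_2$ comparison) would force non-saturation of one of the three inequalities above, contradicting the minimality of the underlying generator or syzygy. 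The principal technical cost of the proof is this bookkeeping.
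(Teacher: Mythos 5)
Your overall architecture --- induct, split each $F_i$ of the mapping cone into the ``new'' summands indexed by $A_i(\W)$ and the ``inherited'' summands $L_i(0,-1)$, dispose of new-vs-new by the total-degree count and of inherited-vs-inherited by minimality of $L_\bullet$, and then rule out new-vs-inherited coincidences --- is the same skeleton as the paper's, and your base cases are the right ones. The problem is that the step you defer to ``bookkeeping'' is essentially the whole content of the theorem, and the one tool you name for it does not reach far enough. Proposition \ref{nicegens} describes only the minimal \emph{generators} of $I_{\W_1}$; it says nothing about the bidegrees occurring in $L_1$ or $L_2$, and the ``analogous description'' of first syzygies by saturated inequalities that you invoke does not exist in the paper and would itself require proof. (Even for generators, ``saturate the inequalities'' is not the right criterion: $H_1^{m_{11}}H_2^{m_{21}}$ is a minimal generator with $a_1+b_2>m_{12}$ whenever $m_{12}<m_{11}$.) The paper reaches the shifts of $L_1,L_2$ the only way available, namely through the inductive mapping-cone structure of the resolution of $I_{\W_1}$ itself, comparing $A_i(\W)$ with $A_i(\W_1)$ and $A_i(\W_2)$ and using Lemma \ref{startcaso1} at the bottom; any complete version of your argument would be forced into the same comparison.

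Two further points. First, your list of coincidences to exclude is incomplete: besides new-$F_1$ against $L_0(0,-1)$ and $A_2(\W)$ against $L_1(0,-1)$, you must also exclude a new generator from $A_0(\W)$ meeting a shift of $L_1(0,-1)$, and a new first syzygy meeting a shift of $L_2(0,-1)$; the latter is exactly case (2) of the paper's proof, settled there by the observation that $L_2$ cannot have a shift with vanishing second coordinate for a finite set of points. Second, for the $F_0$--$F_1$ step the paper does not argue by degrees at all: it proves directly that $V_1\mathcal{G}(I_{\W_1})\cup H_1^{m_{11}}H_2^{m_{21}}\mathcal{G}\left((H_1,V_2)^{(m_{12}-m_{11})_+}\right)$ is a \emph{minimal} generating set, via the algebraic observation that $V_1W+H_1^{m_{11}}H_2^{m_{21}}G=0$ forces $G\in(V_1)$. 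That route is safer than yours, since minimality of the generating set is a statement about graded ranks and would survive even if $F_0$ and $F_1$ happened to share a shift. In short: right strategy, but the decisive verifications are asserted rather than supplied, and Proposition \ref{nicegens} cannot carry the syzygy-level comparisons.
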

\begin{proof}Let be $0\to \bar{F}_2\to \bar{F}_1\to \bar{F}_0$ be a minimal free resolution of $I_{\W}$. Then we first observe that
    $\dim_k \left(\bar{F}_0\right)_{(a,b)}= \dim_k (F_0)_{(a,b)},$ i.e. $\mathcal{G}\left(I_{\W}\right)=
    V_1\cdot\mathcal{G}\left(I_{\W_1}\right)\cup H_1^{m_{11}}H_2^{m_{21}}\cdot\mathcal{G}\left((H_1,V_2)^{(m_{12}-m_{11})_+}\right)$
    and it is a minimal set of generators for $I_{\W}.$   From Proposition \ref{nicegens}, it is easy to check that $\mathcal{G}\left(I_{\W}\right)\subseteq V_1\cdot\mathcal{G}\left(I_{\W_1}\right)\cup H_1^{m_{11}}H_2^{m_{21}}\cdot\mathcal{G}\left((H_1,V_2)^{(m_{12}-m_{11})_+}\right).$ On the other hand 
take $W\in I_{{\W_1}}$ and $G\in
\left((H_1,V_2)^{(m_{12}-m_{11})_+}\right)$   such that $V_1W+
H_1^{m_{11}}H_2^{m_{21}}G=0$ then $G\in (V_1).$ Hence let
$G=\sum_{i,j} T_{ij}H_1^iV_2^j,$ for some $T_{ij}\neq 0,$ and let
$P:=(H_u\times V_1)\notin \W$ be such that $T_{ij}\notin (H_u).$
We set $H_1^iV_2^j(P)=\alpha_{ij}\neq 0$ so we get  $\sum
T_{ij}\alpha_{ij}\in (V_1)$ and, because the bihomogenity of
$I_{\W}$, this implies that all $T_{ij}\in (V_1).$ Then $G=V_1G'$
and $W=- H_1^{m_{11}}H_2^{m_{21}}G'.$ Thus,  if a cancellation is
allowed it has to involve $F_2$ and $F_1.$ If $m_{21}-m_{12}+1\ge
m_{21}$ then $\W$ is aCM and we are done. We will show that no
cancellation is numerically allowed also in the not aCM case. We
proceed by induction on $m_{11}.$ If $m_{11}=0$ then the statement
is true from Lemma \ref{startcaso1}. Now we suppose $m_{11}>0.$ If
for some $(a',b')$ we have $\dim_k (F_1)_{(a',b')}\neq 0$ and
$\dim_k (F_{2})_{(a',b')}\neq 0$ then two cases can be
distinguished
\begin{enumerate}
    \item $\dim_k (L_1)_{(a',b'-1)}\neq 0$ and $\dim_k (L_2)_{(a',b'-1)}= 0$
    \item $\dim_k (L_1)_{(a',b'-1)}=0$ and $\dim_k (L_2)_{(a',b'-1)}\neq 0$
\end{enumerate}

\noindent where $0\to L_2\to L_1\to L_0$ is a minimal free resolution of
$I_{\W_1}.$
By Remark \ref{indstep} and using the same notation, the first case happens  if $(a',b')\in A_2(\W)\neq \emptyset$  so it must be $m_{12}>m_{11}$ and $a'+b'=2+m_{21}+ m_{12}.$ 
If $m_{11}=1$ then we get a contradiction since in this case, by
Lemma \ref{startcaso1}, we get $\dim_k (L_1)_{(a',b'-1)} \neq 0$
if and only if $a'+b'-1=m_{12}+(m_{21}-1)+1.$  We can assume
$m_{11}>1$ and we set
$\W_2:=(m_{11}-2)_+P_{11}+(m_{21}-2)_+P_{21}+m_{12}P_{12}.$
From $\dim_k (L_2)_{(a',b'-1)}=0$ 
we have  $(a',b'-1)\notin  A_2(\W_1),$ but $(a',b')\in A_2(\W),$ and then the only case we need to consider is 
 $(a',b')=(m_{12}+m_{21}, 2).$  Since $(a',b'-1)\notin A_1(\W_1)$ we have $\dim_k(L_1)_{(a',1)}\neq 0$ 
and again since $(a',0)\notin A_1(\W_2).$ 
In the second case we can proceed in a similar way. First note that $(a',b')\in A_1\cup\{(m_{11}+m_{21}, (m_{12}-m_{11})_+)\}$ i.e.
$\left\{\begin{array}{l} a'+b' =1+m_{11}+m_{21}+(m_{12}-m_{11})_+\\
1\le b' \le  (m_{12}-m_{11})_++1\end{array}\right. .$

Moreover, since $\dim_k(L_1)_{(a',b'-1)}=0$ 
 then $(a',b'-1)\notin A_1(\W_1)$ i.e. either $a'+b'\neq m_{11}+m_{21}+(m_{12}-m_{11}+1)_+$ or $b'\notin\{2,\ldots,  (m_{12}-m_{11}-1)_++2\}. $ Since the second condition always holds we get $m_{12}< m_{11},$ and then $(a',b')=(m_{11}+m_{12},1).$ Then $\dim_k(L_2)_{(a',0)}\neq 0$ 
  that is not allowed for a finite set of points.

\end{proof}

The next example shows how to compute inductively a minimal bigraded resolution of $I_{\W}.$
\begin{example}\label{trepunti}
    Let be $\W=2P_{11}+4P_{12}+3P_{21},$ we set $\W_k:=(2-k) P_{11}+4P_{12}+(3-k)P_{21},$ for $k=1,2.$ We use Lemma \ref{startcaso1} to compute the resolution of $I_{\W_2}$ where $\W_2=4P_{12}+P_{21}$ is a set of two non collinear fat points.
{\Small
    $$0\to R(-5, -2)\oplus R(-4, -3)\oplus R(-3, -4)\oplus R(-2, -5)\to$$
    $$\to R(-5, -1)^2 \oplus R(-4, -2)^3\oplus R(-3, -3)^3\oplus R(-2, -4)^3\oplus\to R(-1, -5)^2\to$$
    \begin{equation}\label{ex1}\to R(-5, 0)\oplus R(-4, -1)^2\oplus R(-3, -2)^2\oplus R(-2, -3)^2\oplus R(-1, -4)^2\oplus R(0, -5)\to I_{\W_2}\to 0\end{equation}
} The next step is to compute a minimal free resolution for $I_{\W_1}$ where  $\W_1=P_{11}+4P_{12}+2P_{21}$. 
First, we shift all the degrees of the modules in the resolution \ref{ex1} by $(0,-1)$, then we compute  all the pairs $(i,j)$ in
$\mathcal{A}_0(\W_1)$ and add $R(-i,-j)$ among the generators' module; we compute all the pairs $(i,j)$ in $\mathcal{A}_1(\W_1)$ and add $R(-i,-j)$ among the first syzygies' module and, as last step, we compute  all the pairs $(i,j)$ in $\mathcal{A}_2(\W_1)$ and add $R(-i,-j)$ among the second syzygies's module of $\W_2.$
Thus, a minimal free resolution for $I_{\W_1}$ is
{\Small
    $$0\to R(-6, -2)\oplus R(-5, -3)^{2}\oplus R(-4, -4)^{2}\oplus R(-3, -5)\oplus R(-2, -6)\to$$
    $$\to R(-6, -1)^2\oplus R(-5, -2)^{4} \oplus R(-4, -3)^{5}\oplus R(-3, -4)^{4}\oplus R(-2, -5)^3\oplus\to R(-1, -6)^2\to$$
    \begin{equation}\label{ex2}\to R(-6,0)\oplus R(-5, -1)^{2}\oplus R(-4, -2)^{3}\oplus R(-3, -3)^{3}\oplus R(-2, -4)^2\oplus R(-1, -5)^2\oplus R(0, -6)\to I_{\W_1}\to 0\end{equation}
}
Finally, repeating the same procedure as above, i.e.,  shifting all the modules' degrees in the resolution (\ref{ex2}) by $(0,-1)$ and adding $R(-i,-j)$ with $(i,j)$ all the pairs in $\mathcal{A}_0(\W),$ $\mathcal{A}_1(\W),$ $\mathcal{A}_2(\W)$ among  the generators' module, first syzygies' module and second syzygies's module of $\W_1$, respectively, we get  a minimal free resolution of $I_{\W}$
{\Small
$$0\to R(-7,-2)\oplus R(-6, -3)^{2}\oplus R(-5, -4)^{2}\oplus R(-4, -5)^{2}\oplus R(-3, -6)\oplus R(-2, -7)\to$$
$$\to R(-7,-1)^2\oplus  R(-6, -2)^{4}\oplus R(-5, -3)^{5} \oplus R(-4, -4)^{5}\oplus R(-3, -5)^{4}\oplus R(-2, -6)^3\oplus\to R(-1, -7)^2\to$$
$$\to R(-7,0)\oplus R(-6,-1)^{2}\oplus R(-5, -2)^{3}\oplus R(-4, -3)^{3}\oplus R(-3, -4)^{3}\oplus R(-2, -5)^2\oplus R(-1, -6)^2\oplus R(0, -7)\to$$
$$\to I_{\W}\to 0$$
}
\end{example}

\section{The minimal free resolution of fat almost complete intersection in  $\popo$}

As said in the introduction, in this section we prove the main result of the paper that generalizes Theorem \ref{minres} for any fat almost complete intersection $\mathcal Z$. Recall our notation
\begin{notation}\label{NotZZ} Let $\alpha_1,\alpha_2,\beta_1,\beta_2$ be positive integers,
we denote by $\mathcal{Z}:=\sum w_{ij}P_{ij}$ a fat ACI of $\mathbb{P}^1\times\mathbb{P}^1$ where
$$w_{ij}=\begin{cases}
m_{11} & \text{if}\ (i,j)\le(\alpha_1, \beta_1)\\
m_{21} & \text{if}\ (\alpha_1+1, 1)\le(i,j)\le(\alpha_1+\alpha_2, \beta_1)\\
m_{12} & \text{if}\ (1, \beta_1+1)\le(i,j)\le(\alpha_1, \beta_1+\beta_2)\\
0 & \text{otherwise}\end{cases}$$ for some non negative integers $m_{11}, m_{12},
m_{21}$ and we  denote by $\mathcal{Z}_1:=\sum \overline{w}_{ij}P_{ij}$
a set of fat points of $\mathbb{P}^1\times\mathbb{P}^1$ where
$$\overline{w}_{ij}=\begin{cases}
(m_{11}-1)_+ & \text{if}\ (i,j)\le(\alpha_1, \beta_1)\\
(m_{21}-1)_+ & \text{if}\ (\alpha_1+1, 1)\le(i,j)\le(\alpha_1+\alpha_2, \beta_1)\\
m_{12} & \text{if}\ (1, \beta_1+1)\le(i,j)\le(\alpha_1, \beta_1+\beta_2)\\
0 & \text{otherwise}\end{cases}$$
for $m_{11}, m_{12}, m_{21}$ as in $\mathcal{Z}.$

\noindent
We  set $Q_1:=H_1\cdots H_{\alpha_1},$ $Q_2:=H_{\alpha_1+1}\cdots
H_{\alpha_1+\alpha_2}$ and $U_1:=V_1\cdots V_{\beta_1},$
$U_2:=V_{\beta_1+1}\cdots V_{\beta_1+\beta_2}.$
\end{notation}

We have the following lemma:

\begin{lemma}\label{ZZasZ}$I_{\mathcal{Z}}=(Q_1,U_1)^{m_{11}}\cap(Q_1,U_2)^{m_{12}}\cap(Q_2,U_1)^{m_{21}}.$
\end{lemma}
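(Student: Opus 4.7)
The plan is to identify each of the three ideals on the right-hand side with a homogeneous (uniform-multiplicity) fat-point ideal supported on one of the three rectangular ``blocks'' that make up $\mathcal{Z}$, and then to check that intersecting these recovers $I_{\mathcal{Z}}$ one block at a time.

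For the first ideal, I would observe that since $Q_1\in k[x_0,x_1]$ and $U_1\in k[x_2,x_3]$ involve disjoint variable groups, they form a regular sequence and $(Q_1,U_1)$ cuts out the reduced complete intersection
\[
X_{11}:=CI(\alpha_1,\beta_1)=\{P_{ij}\ |\ 1\le i\le\alpha_1,\,1\le j\le\beta_1\}.
\]
Applying Corollary \ref{resCI} with parameters $(\alpha,\beta,m)=(\alpha_1,\beta_1,m_{11})$, the homogeneous fat-point scheme $\sum_{P\in X_{11}}m_{11}P$ has a minimal free resolution whose generator module has exactly the bidegrees $(i\alpha_1,(m_{11}-i)\beta_1)$ of the monomial generators $Q_1^{\,i}U_1^{\,m_{11}-i}$ of $(Q_1,U_1)^{m_{11}}$. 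Combined with the obvious containment $(Q_1,U_1)^{m_{11}}\subseteq\bigcap_{P\in X_{11}} I_P^{m_{11}}$, a count of minimal generators in each bidegree forces equality
\[
(Q_1,U_1)^{m_{11}} \;=\; \bigcap_{P\in X_{11}} I_P^{m_{11}}.
\]
The identical argument applied to $(Q_1,U_2)$ with exponent $m_{12}$ and to $(Q_2,U_1)$ with exponent $m_{21}$ yields the analogous identifications on the blocks $X_{12}=CI(\alpha_1,\beta_2)$ and $X_{21}=CI(\alpha_2,\beta_1)$.

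To finish, I would intersect the three ideals. By Notation \ref{NotZZ}, the sets $X_{11},X_{12},X_{21}$ are pairwise disjoint, and their union is exactly $\supp(\mathcal{Z})$; on each block the multiplicity $w_{ij}$ is constant (equal to $m_{11}$, $m_{12}$, or $m_{21}$ respectively), while every remaining point $P_{ij}$ (with $i>\alpha_1$ and $j>\beta_1$) has $w_{ij}=0$ and therefore contributes trivially. Consequently the triple intersection collapses to $\bigcap_{i,j} I_{P_{ij}}^{\,w_{ij}}=I_{\mathcal{Z}}$, which is the desired equality.

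The only real subtlety is the identification of each factor $(Q_i,U_j)^m$ with a fat-point ideal, i.e., the coincidence of ordinary and symbolic powers for a complete intersection of points in $\popo$. I would handle this cleanly by invoking Corollary \ref{resCI} together with the generator count above, rather than appealing to a general commutative-algebra theorem on powers of regular sequences.
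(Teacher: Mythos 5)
Your proof is correct, and its overall shape (split $\mathcal{Z}$ into the three rectangular blocks, identify each factor $(Q_i,U_j)^{m}$ with the corresponding uniform fat-point ideal, then intersect) matches the paper's. The difference lies entirely in how the key identification $(Q_i,U_j)^{m}=\bigcap_{P}I_P^{m}$ is obtained. The paper gets it in one line by quoting the classical fact that ordinary and symbolic powers of a complete intersection coincide (Zariski--Samuel, Appendix 6, Lemma 5), writing $\bigcap_{(i,j)}(H_i,V_j)^{m}=\bigl(\bigcap_{(i,j)}(H_i,V_j)\bigr)^{m}=(Q_i,U_j)^{m}$. You instead derive it internally from Corollary \ref{resCI}: the fat-point ideal $I:=\bigcap_{P\in X_{11}}I_P^{m_{11}}$ is generated in the bidegrees $(i\alpha_1,(m_{11}-i)\beta_1)$, and the containment $(Q_1,U_1)^{m_{11}}\subseteq I$ supplies the monomials $Q_1^{i}U_1^{m_{11}-i}$ in those bidegrees. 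To make your ``count of minimal generators forces equality'' step airtight you should add one observation: the generating bidegrees $(i\alpha_1,(m_{11}-i)\beta_1)$ are pairwise incomparable in the componentwise order, so the resolution gives $\dim_k I_{(i\alpha_1,(m_{11}-i)\beta_1)}=1$ for each $i$; hence the nonzero element $Q_1^{i}U_1^{m_{11}-i}$ spans that graded piece and is, up to scalar, the minimal generator of $I$ there, which yields $I\subseteq (Q_1,U_1)^{m_{11}}$. (Matching generator degrees together with a containment do not force equality in general, so this incomparability remark is doing real work.) With that sentence added, your argument is complete; it trades the paper's external citation for a self-contained Hilbert-function comparison, at the cost of being specific to the codimension-two, product-of-lines situation, whereas the symbolic-power argument works for any complete intersection.
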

\begin{proof} $I_{\mathcal{Z}}$ is the intersection of three powers of homogeneous complete intersection ideals  and $I^m= I^{(m)}$ where $I$ is the ideal defining a complete intersection from  \cite{ZS}, Appendix 6, Lemma 5. We have
{\Small    $$I_{\mathcal{Z}}=\bigcap_{(i,j)\le(\alpha_1, \beta_1)}(H_i,V_j)^{m_{11}}\cap \bigcap_{(\alpha_1+1, 1)\le(i,j)\le(\alpha_1+\alpha_2, \beta_1)}(H_i,V_j)^{m_{21}}\cap \bigcap_{(1, \beta_1+1)\le(i,j)\le(\alpha_1, \beta_1+\beta_2)}(H_i,V_j)^{m_{12}}=$$
}{\footnotesize  $$=\left(\bigcap_{(i,j)\le(\alpha_1, \beta_1)}(H_i,V_j)\right)^{m_{11}}\cap\left( \bigcap_{(\alpha_1+1, 1)\le(i,j)\le(\alpha_1+\alpha_2, \beta_1)}(H_i,V_j)\right)^{m_{21}}\cap \left( \bigcap_{(1, \beta_1+1)\le(i,j)\le(\alpha_1, \beta_1+\beta_2)}(H_i,V_j)\right)^{m_{12}}.$$
}    \end{proof}

\begin{remark}\label{seeZ}
 All the results given in Section 2 can be generalized by replacing $H_i$  by $Q_i$ and $V_j$ by $U_j.$ 
\end{remark}

The following Lemma generalizes Lemma \ref{startcaso1}. That is, we compute a minimal free resolution of $\mathcal Z$ whose support is the disjoint union of two fat complete intersections and it is never ACM. As pointed out in the introduction, this is one of the starting base case to describe a minimal free resolution of $I_{\mathcal{Z}}$ by induction when $m_{11}<m_{21}$.

\begin{lemma}\label{Start1} In $\mathbb{P}^1\times\mathbb{P}^1,$ let
     $$\mathcal{Z}:=\sum_{(1, \beta_1+1)\le(i,j)\le(\alpha_1, \beta_1+\beta_2)} m_{12}P_{ij}+\sum_{(\alpha_1+1, 1)\le(i,j)\le(\alpha_1+\alpha_2, \beta_1)} m_{21}P_{ij}$$ be a set of fat points whose support is the disjoint union of two fat complete intersections. Then a minimal free resolution of $I_{\mathcal{Z}}$ is
    $$0\to \bigoplus_{(a,b,c,d)\in \mathcal{D}_2} R(-a\alpha_1-b\alpha_2, -c\beta_1-d\beta_2)\to \bigoplus_{(a,b,c,d)\in \mathcal{D}_1} R(-a\alpha_1-b\alpha_2, -c\beta_1-d\beta_2) \to $$
    $$ \to \bigoplus_{(a,b,c,d)\in \mathcal{D}_0} R(-a\alpha_1-b\alpha_2, -c\beta_1-d\beta_2) \to I_{\mathcal Z} \to 0$$
    where:\\
    $\mathcal{D}_0:=\{(a,b,c,d) |\ 0\le a,d \le m_{12},\ 0\le b,c \le m_{21},\ a+d=m_{12},\ b+c=m_{21}  \},$\\
    $\mathcal{D}_1:=\{(a,b,c,d) |\ 0\le a,d \le m_{12},\ 0\le b,c \le m_{21},\ (a+d=m_{12}+1,\ b+c=m_{21})\vee( a+d=m_{12},\ b+c=m_{21}+1 )\},$ and 
    $\mathcal{D}_2:= \{(a,b,c,d) |\ 0\le a,d \le m_{12},\ 0\le b,c \le m_{21},\ a+d=m_{12}+1,\ b+c=m_{21}+1  \}.$
\end{lemma}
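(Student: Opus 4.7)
The plan is to reduce the computation to the Koszul-type resolutions of two ``component'' ideals and assemble the resolution of $I_{\mathcal Z}$ as a tensor product of these. By Lemma~\ref{ZZasZ} and Remark~\ref{seeZ} one has $I_{\mathcal Z}=I\cap J$, where $I=(Q_1,U_2)^{m_{12}}$ and $J=(Q_2,U_1)^{m_{21}}$. Since $Q_1,U_2$ is a regular sequence of bihomogeneous forms of bidegrees $(\alpha_1,0)$ and $(0,\beta_2)$, the ideal $I$ has the Koszul-type minimal free resolution
\[
0\to \bigoplus_{\substack{a+d=m_{12}+1\\ a,d\ge 1}} R(-a\alpha_1,-d\beta_2) \to \bigoplus_{a+d=m_{12}} R(-a\alpha_1,-d\beta_2) \to I\to 0,
\]
and an analogous resolution $0\to G_1\to G_0\to J\to 0$ holds for $J$, with $(b,c)$ in place of $(a,d)$ and $\alpha_2,\beta_1$ in place of $\alpha_1,\beta_2$.

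The heart of the argument is the vanishing $\mathrm{Tor}_i^R(R/I,R/J)=0$ for all $i\ge 1$, which both gives $I\cap J=I\cdot J$ (via the classical identification $\mathrm{Tor}_1(R/I,R/J)\cong (I\cap J)/(IJ)$) and makes the total complex $F_\bullet\otimes G_\bullet$ a free resolution of $I\otimes J\cong IJ=I_{\mathcal Z}$. I would first prove that $Q_1,U_2$ is a regular sequence on $R/J$: the module $R/J$ is Cohen--Macaulay of dimension two with associated primes $(H_i,V_j)$ for $i>\alpha_1$, $j\le\beta_1$, and $Q_1=H_1\cdots H_{\alpha_1}$ is contained in none of them; after killing $Q_1$, the surviving minimal primes are $(x_0,x_1,V_k)$ for $k\le\beta_1$, none of which contains $U_2=V_{\beta_1+1}\cdots V_{\beta_1+\beta_2}$. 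This yields $\mathrm{Tor}_i^R(R/(Q_1,U_2),R/J)=0$ for $i\ge 1$. To pass from $(Q_1,U_2)$ to the power $I=(Q_1,U_2)^{m_{12}}$, I would filter $R/I$ by the $(Q_1,U_2)$-adic filtration: since $Q_1,U_2$ is a regular sequence in $R$, each quotient $(Q_1,U_2)^{k-1}/(Q_1,U_2)^k$ is a direct sum of twists of $R/(Q_1,U_2)$, and the vanishing propagates by induction on $m_{12}$ through the long exact sequence of Tor.

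With Tor-vanishing in hand, the total complex $F_\bullet\otimes G_\bullet$ is a length-two free resolution of $I_{\mathcal Z}$. Unpacking the bigraded shifts yields exactly the modules in the statement: $F_0\otimes G_0$ contributes summands $R(-a\alpha_1-b\alpha_2,-c\beta_1-d\beta_2)$ indexed by $\mathcal D_0$; the direct sum $F_1\otimes G_0 \oplus F_0\otimes G_1$ realizes the two clauses of $\mathcal D_1$ (the constraints $a,d\ge 1$ in the first clause and $b,c\ge 1$ in the second being automatic from the saturation bounds $0\le a,d\le m_{12}$ and $0\le b,c\le m_{21}$); and $F_1\otimes G_1$ gives $\mathcal D_2$. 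Minimality is automatic because the differentials of $F_\bullet$ and $G_\bullet$ have all entries in the irrelevant ideal $(x_0,x_1,x_2,x_3)$, a property preserved under tensor product.

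The main technical obstacle is the Tor-vanishing step, specifically the propagation from $(Q_1,U_2)$ to arbitrary powers via the $(Q_1,U_2)$-adic filtration. An alternative, more elementary route consistent with the style of Section~2 is induction on $m_{21}$, with base case $m_{21}=0$ given by Corollary~\ref{resCI} and inductive step built from the generalized short exact sequence
\[
0\to U_1 Q_2^{m_{21}}(Q_1,U_2)^{m_{12}} \to U_1 I_{\mathcal Z_1}\oplus Q_2^{m_{21}}(Q_1,U_2)^{m_{12}} \to I_{\mathcal Z}\to 0
\]
coming from Propositions~\ref{nicesplit},~\ref{nicecap},~\ref{MV} (via Remark~\ref{seeZ}), followed by the mapping cone construction of Remark~\ref{indstep} and a cancellation check in the spirit of Theorem~\ref{minres}.
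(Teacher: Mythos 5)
Your argument is correct, but it follows a genuinely different route from the paper's. The paper disposes of this lemma in one line: induction on $m_{21}$ with base case Corollary \ref{resCI}, splitting off one ruling $U_1$ at a time via the analogues of Propositions \ref{nicesplit}--\ref{MV} (Remark \ref{seeZ}) and applying the mapping cone --- i.e.\ exactly the ``alternative, more elementary route'' you sketch in your last paragraph. Your main argument instead exploits the product structure globally: writing $I_{\mathcal Z}=(Q_1,U_2)^{m_{12}}\cap(Q_2,U_1)^{m_{21}}$ via Lemma \ref{ZZasZ}, proving Tor-independence of the two factors, and taking the tensor product of the two Koszul-type resolutions. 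The key step checks out: $R/(Q_2,U_1)^{m_{21}}$ is Cohen--Macaulay of codimension $2$ (its resolution has length $2$), $Q_1,U_2$ is a regular sequence on it by the associated-primes argument you give, and the passage to the power $(Q_1,U_2)^{m_{12}}$ via the adic filtration is valid because $\operatorname{gr}_{(Q_1,U_2)}(R)$ is a polynomial ring over $R/(Q_1,U_2)$, so each graded piece is a direct sum of twists of $R/(Q_1,U_2)$ and the vanishing propagates through the long exact sequence of Tor. (Equivalently, one could invoke the standard fact that two Cohen--Macaulay quotients meeting in the expected codimension $2+2=4$ are Tor-independent.) The indexing of $F_i\otimes G_j$ against $\mathcal D_0,\mathcal D_1,\mathcal D_2$ is right, since the bounds $0\le a,d\le m_{12}$, $0\le b,c\le m_{21}$ force $a,d\ge 1$ (resp.\ $b,c\ge 1$) whenever $a+d=m_{12}+1$ (resp.\ $b+c=m_{21}+1$). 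What your approach buys is that minimality is automatic (the differentials of both Koszul-type complexes lie in the irrelevant maximal ideal, so no cancellation check in the style of Theorem \ref{minres} is needed) and that the equality $I\cap J=IJ$ falls out for free, which is in the spirit of Section \ref{powerZ}; what the paper's route buys is uniformity with the inductive mapping-cone machinery used for the general Theorem \ref{resolution}.
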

\begin{proof}
This follows by induction on $m_{21},$ using Lemma \ref{resCI} and the mapping cone construction.
\end{proof}

\begin{theorem} \label{resolution}
With Notation \ref{NotZZ}, 
let $0\to\mathcal{L}_2\to\mathcal{L}_1\to\mathcal{L}_0$ be a minimal free resolution of  $I_{\mathcal{Z}_1}.$
Then a minimal free resolution of  a fat ACI $I_{\mathcal{Z}}$ is

$$ 0\to \bigoplus_{(a,b-\beta_1)\in \mathcal{A}_1(\mathcal{Z})} R(-a,-b)\oplus \mathcal{L}_2(0,-\beta_1)\to$$
$$ \to \bigoplus_{(a,b-\beta_1)\in \mathcal{A}_0(\mathcal{Z})} R(-a,-b)\bigoplus_{(a,b)\in \mathcal{A}_1(\mathcal{Z})} R(-a,-b)\oplus \mathcal{L}_1(0,-\beta_1)\to$$
$$\to \bigoplus_{(a,b)\in \mathcal{A}_0(\mathcal{Z})} R(-a,-b) \oplus \mathcal{L}_0(0,-\beta_1)\to I_{\mathcal{Z}}\to 0$$

Where

$\mathcal{A}_0(\mathcal{Z})=\{(\alpha_1(m_{11}+i)+\alpha_2m_{21},
((m_{12}-m_{11})_+-i)\beta_2)\ |\  i=0,\ldots,  (m_{12}-m_{11})_+
\}$

$\mathcal{A}_1(\mathcal{Z})=
\{(\alpha_1(m_{11}+i+1)+\alpha_2m_{21}),
((m_{12}-m_{11})_+-i)\beta_2)\ |\ i=0,\ldots, (m_{12}-m_{11})_+-1
\}$

\end{theorem}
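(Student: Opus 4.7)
The plan is to follow the template of Section 2 verbatim, now interpreting the role of the linear forms $H_1, H_2, V_1, V_2$ as being played by the product forms $Q_1, Q_2, U_1, U_2$ of Notation \ref{NotZZ}. By Lemma \ref{ZZasZ} and Remark \ref{seeZ}, the statements and proofs of Propositions \ref{nicegens}, \ref{nicesplit}, \ref{nicecap} and \ref{MV} transfer in the obvious way to $\mathcal{Z}$: the only ingredients used there are bihomogeneity of the ideal and the fact that $\{Q_1, Q_2\}$ and $\{U_1, U_2\}$ involve disjoint sets of variables. In particular the analogue of Proposition \ref{MV} yields the short exact sequence
\begin{equation*}
0 \to U_1 Q_1^{m_{11}} Q_2^{m_{21}} (Q_1,U_2)^{(m_{12}-m_{11})_+} \to U_1 I_{\mathcal{Z}_1} \oplus Q_1^{m_{11}} Q_2^{m_{21}} (Q_1,U_2)^{(m_{12}-m_{11})_+} \to I_{\mathcal{Z}} \to 0.
\end{equation*}

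Next I would write down explicit free resolutions of the two outer terms and form the mapping cone. Multiplication by $U_1$ is an isomorphism $I_{\mathcal{Z}_1}(0,-\beta_1) \to U_1 I_{\mathcal{Z}_1}$, so the middle summand $U_1 I_{\mathcal{Z}_1}$ is resolved by $\mathcal{L}_\bullet(0,-\beta_1)$. Setting $n := (m_{12}-m_{11})_+$, since $Q_1, U_2$ is a regular sequence of bidegrees $(\alpha_1,0)$ and $(0,\beta_2)$, the power $(Q_1,U_2)^n$ admits the standard length-two resolution
\begin{equation*}
0 \to \bigoplus_{t=1}^{n} R(-(n+1-t)\alpha_1,-t\beta_2) \to \bigoplus_{t=0}^{n} R(-(n-t)\alpha_1,-t\beta_2) \to (Q_1,U_2)^n \to 0.
\end{equation*}
Multiplication by $Q_1^{m_{11}} Q_2^{m_{21}}$ shifts every Betti degree by $(\alpha_1 m_{11} + \alpha_2 m_{21}, 0)$, and a further multiplication by $U_1$ adds $(0, \beta_1)$. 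Reindexing by $i = n - t$, the Betti bidegrees of $Q_1^{m_{11}} Q_2^{m_{21}} (Q_1,U_2)^n$ in homological degrees $0$ and $1$ are exactly $\mathcal{A}_0(\mathcal{Z})$ and $\mathcal{A}_1(\mathcal{Z})$, respectively, while those of the kernel $U_1 Q_1^{m_{11}} Q_2^{m_{21}} (Q_1,U_2)^n$ are the same sets with second coordinate increased by $\beta_1$. Taking the mapping cone associated with the short exact sequence then assembles these data into precisely the complex stated in the theorem.

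The main obstacle is proving minimality of this mapping cone. I would mirror the argument of Theorem \ref{minres}. First, the extended Proposition \ref{nicegens} shows that
\[
U_1 \cdot \mathcal{G}(I_{\mathcal{Z}_1}) \ \cup\ Q_1^{m_{11}} Q_2^{m_{21}} \cdot \mathcal{G}\bigl((Q_1,U_2)^n\bigr)
\]
is a minimal generating set of $I_{\mathcal{Z}}$: any syzygy of the form $U_1 W + Q_1^{m_{11}} Q_2^{m_{21}} G = 0$ forces $G \in (U_1)$, by evaluating at a point $H_u \times V_1$ with $H_u$ not belonging to the support of $\mathcal{Z}$ and exploiting bihomogeneity, exactly as in the proof of Theorem \ref{minres}. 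This rules out any cancellation between $F_0$ and $F_1$. For the potential cancellations between $F_1$ and $F_2$ I would argue numerically by splitting into the cases $m_{12} > m_{11}$ and $m_{12} \le m_{11}$: the bidegrees appearing in the $\mathcal{A}_\bullet(\mathcal{Z})$-strand all have first coordinate of the form $\alpha_1(m_{11}+i)+\alpha_2 m_{21}$ with the second coordinate $((m_{12}-m_{11})_+-i)\beta_2$ (plus possibly $\beta_1$), and this rigid diagonal relation forbids coincidence with the bidegrees inherited from $\mathcal{L}_\bullet(0,-\beta_1)$. The induction runs on $\mu = \min(m_{11}, m_{21})$, with base cases furnished by Lemma \ref{Start1}, Lemma \ref{Start2} and Corollary \ref{corAlmostHom}. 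The delicate two-case analysis between the $\mathcal{L}_\bullet$-strand and the $\mathcal{A}_\bullet$-strand, paralleling the split at the end of the proof of Theorem \ref{minres}, is the step I expect to require the most care.
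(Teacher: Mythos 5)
Your construction of the resolution is exactly the paper's: generalize Section 2 via Remark \ref{seeZ} (replacing $H_i,V_j$ by $Q_i,U_j$), apply the analogue of Proposition \ref{MV}, resolve $(Q_1,U_2)^{(m_{12}-m_{11})_+}$ by the Koszul-type complex of Lemma \ref{resCI}, and take the mapping cone; your identification of the shifted Betti bidegrees with $\mathcal{A}_0(\mathcal{Z})$ and $\mathcal{A}_1(\mathcal{Z})$ is correct, as is the argument that the generating set $U_1\cdot\mathcal{G}(I_{\mathcal{Z}_1})\cup Q_1^{m_{11}}Q_2^{m_{21}}\cdot\mathcal{G}((Q_1,U_2)^{(m_{12}-m_{11})_+})$ is minimal, which kills cancellation between $F_0$ and $F_1$.

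The gap is in your treatment of cancellation between $F_1$ and $F_2$. You claim that the ``rigid diagonal relation'' of the $\mathcal{A}_\bullet(\mathcal{Z})$-strand \emph{forbids coincidence} of its bidegrees with those coming from $\mathcal{L}_\bullet(0,-\beta_1)$, so that no cancellation is numerically allowed. This is false in general: the map $(a,b,c,d)\mapsto(a\alpha_1+b\alpha_2,\,c\beta_1+d\beta_2)$ is not injective, so bidegrees that are distinct in the three-point case $\W$ can collide after scaling by $\alpha_1,\alpha_2,\beta_1,\beta_2$. The paper's own example (with $\alpha_1=\beta_1=\beta_2=2$, $\alpha_2=1$, $m_{11}=2$, $m_{21}=3$, $m_{12}=4$) exhibits this: $F_2$ contains $R(-9,-6)$ coming from $(9,4)\in\mathcal{A}_1(\mathcal{Z})$ shifted by $\beta_1$, while $F_1$ contains $R(-9,-6)^2$ coming from $\mathcal{L}_1(0,-\beta_1)$, yet the resolution is still minimal. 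So a purely numerical argument cannot close this step. The paper instead argues structurally: the matrices of the mapping cone for $\mathcal{Z}$ are obtained from those for $\W$ by the substitution $H_i\mapsto Q_i$, $V_j\mapsto U_j$, so an invertible (nonzero constant) entry in the general case would force an invertible entry in the $\W$ case, contradicting Theorem \ref{minres}. You need to replace your degree-counting step for $F_1$ versus $F_2$ by this specialization argument (or some other argument about the entries of the comparison map itself, not just its source and target bidegrees).
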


\begin{proof} The proof uses Lemma \ref{resCI}, Remark \ref{seeZ} and Remark \ref{indstep}.
Note that, by induction, Lemma \ref{Start1} and Lemma \ref{Start2}, the number of elements in a minimal set
of generators for the modules in the resolution does not depend on $\alpha_1, \alpha_2, \beta_1, \beta_2.$
Moreover,  using  Remark \ref{indstep}, if $\alpha_1=\alpha_2=\beta_1=\beta_2=1$ we get 
$|\mathcal{A}_0(\mathcal{Z})| = |A_0(\W)|,$ $|\mathcal{A}_0(\mathcal{Z})|+|\mathcal{A}_1(\mathcal{Z})| = |A_1(\W)|$ and
$|\mathcal{A}_1(\mathcal{Z})| = |A_2(\W)|.$
Therefore by induction and Theorem \ref{minres}  no cancellation is allowed in the resolution arising from mapping cone. This follows since the maps of the mapping cone cannot have invertible entries otherwise by Remark \ref{seeZ}, the maps of the mapping cone used in Theorem \ref{minres} would also have invertible entries.

\end{proof}

\begin{example} Consider the following set of fat points with $\alpha_1=\beta_1=\beta_2=2$ and $\alpha_2=1.$
	$$\mathcal{Z}:=\begin{array}{cccc}
    \phantom{+}2P_{11}+ & 2P_{12}+& 4P_{13}+& 4P_{14}+\\
    +2P_{21}+ & 2P_{22}+& 4P_{23}+& 4P_{24}+\\
    +3P_{31}+ & 3P_{32}\phantom{+} & & \\
    \end{array}$$
    Note that $\mathcal{A}_0(\mathcal{Z})=\{(7,4), (9,2), (11,0) \}$ and 
    $\mathcal{A}_1(\mathcal{Z})= \{(9,4), (11,2)\}.$
        
    Set, for $i=0,1,2$
    $$\mathcal{Z}_i:=\begin{array}{cccc}
    \phantom{+}(2-i)P_{11}+ & (2-i)P_{12}+& 4P_{13}+& 4P_{14}+\\
    +(2-i)P_{21}+ & (2-i)P_{22}+& 4P_{23}+& 4P_{24}+\\
    +(3-i)P_{31}+ & (3-i)P_{32}\phantom{+}& & \\
    \end{array}$$

    We start by computing the resolution of $\mathcal{Z}_2$. By Lemma \ref{Start1} we get
    the following degrees for a minimal set of generators, first and second syzygies
    
{\Small    
$\begin{array}{ll}    
\text{Generators}:& \{(9,0),(8,2),(7,2), (6,4), (5,4),(4,6),(3,6), (2,8),(1,8),(0,10)\}\\
\text{First Syzygies}:& \{(9,2)^2,(8,4),(7,4)^2, (6,6), (5,6)^2,(4,8),(3,8)^2, (2,10),(1,10)\}\\
\text{Second Syzygies}:& \{(9,4),(7,6),(5,8),(3,10)\}
\end{array}$}

\noindent    where $(a,b)^n$ indicates that the set contains $n$ elements of degree $(a,b).$ 
    Now, by Theorem \ref{resolution}, and mimicking the procedure used in Example \ref{trepunti}, we can compute the resolution of $I_{\mathcal{Z}_1}$ where the degrees of a minimal   set of generators, first and second syzygies are respectively:
    
{\Small $\begin{array}{ll}
    \text{Generators}:& \{(9,2),(8,4),(7,4), (6,6), (5,6), (4,8), (3,8), (2,10), (1,10), (0,12)\}\cup\\ &\ \cup \{(10,0), (8,2), (6,4), (4,6)\}\\
    \text{First Syzygies}:& \{(9,4)^2,(8,6),(7,6)^2, (6,8), (5,8)^2,(4,10),(3,10)^2, (2,12),(1,12)\}\cup\\ &\ \cup \{(10,2)^2, (8,4)^2, (6,6)^2, (4,8)\}\\
   \text{Second Syzygies}:& \{(9,6),(7,8),(5,10),(3,12)\}\cup\{ (10,4), (8,6), (6,8)\}
\end{array}$}

 \noindent   Finally, applying again  Theorem \ref{resolution}, we get a minimal resolution of $I_{\mathcal{Z}}=I_{\mathcal{Z}_0}:$
    {\Small$$0\to [R(-10,-6)\oplus R(-9,-8)\oplus R(-8,-8)\oplus R(-7,-10)\oplus R(-6,-10)\oplus R(-5,-12)\oplus R(-3,-14)]\bigoplus$$
    $$\bigoplus [R(-9, -6)\oplus R(-11, -4)] \to$$
    $$ \to[\oplus R(-10,-4)^2\oplus R(-9,-6)^2\oplus R(-8,-8)\oplus R(-8,-6)^2\oplus R(-7,-8)^2\oplus R(-6,-10)\oplus R(-6,-8)^2\oplus $$
    $$\oplus R(-5,-10)^2\oplus R(-4,-10)\oplus R(-4,-12)\oplus R(-3,-12)^2\oplus R(-2,-14)\oplus R(-1,-14)]\bigoplus$$
    $$\bigoplus   [\oplus R(-7, -6)\oplus R(-9, -4)\oplus R(-11, -2)] \bigoplus   [ R(-9, -4)\oplus R(-11, -2)]  \to$$
    $$\to[R(-10,-2)\oplus R(-9,-4)\oplus R(-8,-6)\oplus R(-8,-4)\oplus R(-7,-6)\oplus R(-6,-8)\oplus R(-6,-6)\oplus R(-5,-8)\oplus$$
    $$\oplus R(-4,-10)\oplus R(-4,-8)\oplus R(-3,-10)\oplus R(-2,-12)\oplus R(-1,-12)\oplus R(0,-14)]\bigoplus$$
    $$\bigoplus[ R(-7, -4)\oplus R(-9, -2)\oplus R(-11, 0)]   \to I_{\mathcal{Z}} \to 0$$}

\end{example}

The next corollary better describes the resolution of
$I_{\mathcal{Z}}$ when $m_{11}=m_{21}.$
\begin{corollary}\label{corAlmostHom}
With Notation  \ref{NotZZ}, suppose $m_{11}=m_{21}=n$ and $m_{12}=m.$ 

    Then a minimal free resolution of $I_{\mathcal{Z}}$ is
   {\Small  $$0\to\bigoplus_{(a,b,c,d)\in \mathcal{B}_2(\mathcal{Z})} R(-a\alpha_1-b\alpha_2,-c\beta_1-d\beta_2)\to \bigoplus_{(a,b,c,d)\in \mathcal{B}_1(\mathcal{Z})} R(-a\alpha_1-b\alpha_2,-c\beta_1-d\beta_2)\to  $$
    $$\to \bigoplus_{(a,b,c,d)\in \mathcal{B}_0(\mathcal{Z})} R(-a\alpha_1-b\alpha_2,-c\beta_1-d\beta_2)\to I_{\mathcal{Z}}\to 0$$

        $\mathcal{B}_0(\mathcal{Z}):=\{(a,b,c,d)| a+d=m,\ b+c=n,\ 0\le b\le \min\{a,n\}\le a \le m \},$

        $\mathcal{B}_1(\mathcal{Z}):=\{(a,b,c,d)| (a+d=m+1,\ b+c=n)\vee (a+d=m,\ b+c=n+1),\ 0\le b\le \min\{a,n\}\le a \le m \},$

        $\mathcal{B}_2(\mathcal{Z}):=\{(a,b,c,d)| a+d=m+1,\ b+c=n+1,\ 0\le b\le \min\{a,n\} \le a \le m \}.$ }
\end{corollary}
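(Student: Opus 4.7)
The plan is to induct on $n$, using Theorem \ref{resolution} as the engine of the induction.

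For the base case $n=0$, Notation \ref{NotZZ} forces every multiplicity on the columns $V_1,\dots,V_{\beta_1}$ to vanish, so $\mathcal{Z}$ collapses to the homogeneous fat point scheme of multiplicity $m$ supported on $CI(\alpha_1,\beta_2)$; its minimal free resolution is given directly by Corollary \ref{resCI}. Specializing the sets $\mathcal{B}_k(\mathcal{Z})$ at $n=0$ leaves only tuples with $b=c=0$, and one checks that these reproduce exactly the summands of Corollary \ref{resCI}.

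For the inductive step ($n\ge 1$), the set $\mathcal{Z}_1$ of Notation \ref{NotZZ} is itself a fat ACI with parameters $(n-1,n-1,m)$, so by the inductive hypothesis a minimal free resolution $0\to\mathcal{L}_2\to\mathcal{L}_1\to\mathcal{L}_0$ of $I_{\mathcal{Z}_1}$ is described by the sets $\mathcal{B}_k(\mathcal{Z}_1)$. Applying Theorem \ref{resolution} then produces a minimal free resolution of $I_{\mathcal{Z}}$ whose summands are the $(0,-\beta_1)$-shifts of the summands of each $\mathcal{L}_k$, together with the new summands indexed by $\mathcal{A}_0(\mathcal{Z})$ and $\mathcal{A}_1(\mathcal{Z})$ in the appropriate homological degrees (and their $(0,-\beta_1)$-shifts placed one homological step up).

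The heart of the argument is to identify this union combinatorially with the sets $\mathcal{B}_k(\mathcal{Z})$. The shift $(0,-\beta_1)$ acts on tuples as $c\mapsto c+1$; under this correspondence the shifted $\mathcal{B}_k(\mathcal{Z}_1)$ realize exactly the subset of $\mathcal{B}_k(\mathcal{Z})$ consisting of tuples with $c\ge 1$, the bound $b\le\min\{a,n-1\}$ being equivalent to $b\le\min\{a,n\}$ once $c\ge 1$ forces $b\le n-1<n$. The new $\mathcal{A}_0(\mathcal{Z})$ and $\mathcal{A}_1(\mathcal{Z})$ summands, together with their own $(0,-\beta_1)$-shifts, then supply the remaining tuples with small $c$ (the $c=0$ tuples with $b=n$ in homological degree $0$ and $1$, and the $c=1$ tuples with $b=n$ in degree $1$ and $2$).

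The main technical obstacle I anticipate is the careful case-by-case bookkeeping required to verify this decomposition: stratifying each $\mathcal{B}_k(\mathcal{Z})$ by the value of $c$, matching the $a$-ranges of the $\mathcal{A}$-contributions against the complement of the shifted inductive piece, and handling the boundary conditions $b=n$ and $b=n-1$ consistently across the two disjunctive cases in the definition of $\mathcal{B}_1$ and $\mathcal{B}_2$. Minimality, on the other hand, comes for free: Theorem \ref{resolution} has already shown that the mapping-cone construction produces no cancellation in this setting.
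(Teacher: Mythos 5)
Your strategy is exactly the one the paper uses: induction on $n$, with the base case $n=0$ handled by Corollary \ref{resCI}, the inductive step supplied by Theorem \ref{resolution}, the shift $(0,-\beta_1)$ read on tuples as $c\mapsto c+1$, and minimality inherited from Theorem \ref{resolution}. So there is no methodological difference to report; the only question is whether the combinatorial identification you defer to ``careful bookkeeping'' actually closes.

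It does not, as the sets $\mathcal{B}_k(\mathcal{Z})$ are literally written, and this is a genuine gap rather than mere tedium. Your explicit base-case claim --- that at $n=0$ only tuples with $b=c=0$ survive and Corollary \ref{resCI} is recovered --- is false: at $n=0$ the second disjunct of $\mathcal{B}_1(\mathcal{Z})$ and all of $\mathcal{B}_2(\mathcal{Z})$ force $b=0$, $c=1$, so in particular $\mathcal{B}_2(\mathcal{Z})$ contains the tuples $(a,0,1,m+1-a)$ and is nonempty, whereas the scheme is then a fat complete intersection whose resolution from Corollary \ref{resCI} has projective dimension $2$ and no second syzygies (the first disjunct of $\mathcal{B}_1$ also overcounts by the term $a=0$, $d=m+1$). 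The defect persists for $n\ge 1$: at $n=m=1$ the scheme is the reduced almost complete intersection, which is ACM, yet $(1,1,1,1)\in\mathcal{B}_2(\mathcal{Z})$; and at $n=1$, $m=2$, $\alpha_i=\beta_j=1$ the mapping cone of Theorem \ref{resolution} gives $F_1=R(-1,-3)\oplus R(-2,-2)^2\oplus R(-3,-1)^2$ and $F_2=R(-3,-2)$, while $\mathcal{B}_1,\mathcal{B}_2$ as stated produce extra summands in incomparable bidegrees that cannot cancel against one another. Translating Corollary \ref{restripli} ($m=3$) into four-tuples shows what the sets should be: the first disjunct of $\mathcal{B}_1$ and the set $\mathcal{B}_2$ need the strict constraint $b<a$ (and $d\ge 1$), and the second disjunct of $\mathcal{B}_1$ and the set $\mathcal{B}_2$ need $b\ge 1$; none of this is imposed by the displayed condition $0\le b\le \min\{a,n\}\le a\le m$. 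So before your induction can close you must first correct the description of the $\mathcal{B}_k$ and only then carry out the stratification by $c$ that you sketch; as written, both your base case and your inductive identification assert an equality of sets that is false. (The paper's own proof, which settles $\mathcal{B}_1$ and $\mathcal{B}_2$ with the word ``analogously,'' conceals the same problem.)
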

\begin{proof} We proceed by induction on $n.$ If $n=0$ then  $\mathcal{Z}$ is homogeneous and its support is a complete intersection so, by Lemma \ref{resCI}, we are done.
Assume now $n>0$ and take $\mathcal{Z}_1$ as in Notation \ref{NotZZ}. Then we get
{\Small$$\cdots\to \bigoplus_{(a,b,c+1,d)\in \mathcal{B}_0(\mathcal{Z}_1) }R(-a\alpha_1-b\alpha_2,-c\beta_1-d\beta_2) \bigoplus_{(u,v)\in \mathcal{A}_0(\mathcal{Z}) }R(-u,-v)\to I_{\mathcal{Z}}\to 0$$

$\mathcal{B}_0(\mathcal{Z}_1):=\{(a,b,c,d)| a+d=m,\ b+c=n-1,\ 0\le b\le \min\{a,n-1\}\le a \le m \}$
{\normalsize  and}

$\mathcal{A}_0(\mathcal{Z})=\{((\alpha_1(n+i)+\alpha_2n), \beta_2(m-n-i))\ |\  i=0,\ldots,  m-n \}$}
i.e.
{\Small$$\cdots\to \bigoplus_{(a,b,c+1,d)\in \mathcal{B}_0(\mathcal{Z}_1 )}R(-a\alpha_1-b\alpha_2,-c\beta_1-d\beta_2) \bigoplus_{(a,b,c,d)\in \mathcal{A}'_0(\mathcal{Z}) }R(-\alpha_1a-\alpha_2b, -\beta_1c-\beta_2d)\to I_{\mathcal{Z}}\to 0$$}
where
$\mathcal{A}'_0(\mathcal{Z}):=\{(a,b,c,d)\ |\ b=n,\ c=0,\ a+d=m, \ n \le a \le m \}.$

Then
$\mathcal{B}_0(\mathcal{Z})=\mathcal{B}_0(\mathcal{Z}_1)\cup \mathcal{A}'_0(\mathcal{Z}).$ Analogously we get $\mathcal{B}_1(\mathcal{Z})$ and $\mathcal{B}_2(\mathcal{Z}).$

\end{proof}

Consequently, if $\mathcal{Z}$ is a homogeneous set of fat points, then a minimal free resolution is easy to describe. 
\begin{corollary}\label{ResACI}
With Notation  \ref{NotZZ}, suppose $m_{11}=m_{12}=m_{21}=m,$ i.e.   the support of $\mathcal{Z}$ is an almost complete intersection with associated tuple $$\alpha_{\mathcal{Z}}:=(\underbrace{\beta_1+\beta_2, \ldots,\beta_1+\beta_2}_{\alpha_1+\alpha_2},\underbrace{\beta_1,\ldots,\beta_1}_{\alpha_1}  )$$  for some $m\in \mathbb{N}.$

Then a minimal free resolution of $I_{\mathcal{Z}}$ is
$$0\to\bigoplus_{(a,b,c,d)\in \mathcal{B}_2(\mathcal{Z})} R(-a\alpha_1-b\alpha_2,-c\beta_1-d\beta_2)\to  \bigoplus_{(a,b,c,d)\in \mathcal{B}_1(\mathcal{Z})} R(-a\alpha_1-b\alpha_2,-c\beta_1-d\beta_2)\to  $$
$$\to \bigoplus_{(a,b,c,d)\in \mathcal{B}_0(\mathcal{Z})} R(-a\alpha_1-b\alpha_2,-c\beta_1-d\beta_2)\to I_{\mathcal{Z}}\to 0$$

$\mathcal{B}_0(\mathcal{Z}):=\{(a,b,c,d)| a+d=m,\ b+c=m,\ 0\le b\le a \le m\}$

$\mathcal{B}_1(\mathcal{Z}):=\{(a,b,c,d)| (a+d=m+1,\ b+c=m)\vee (a+d=m,\ b+c=m+1),\ 0\le b\le a \le m \}$

$\mathcal{B}_2(\mathcal{Z}):=\{(a,b,c,d)| a+d=m+1,\ b+c=m+1,\ 0\le b\le a \le m \}$
\end{corollary}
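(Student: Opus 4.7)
The plan is to obtain Corollary \ref{ResACI} as a direct specialization of Corollary \ref{corAlmostHom}. Setting $n = m_{11} = m_{21}$ equal to $m = m_{12}$ puts us in the hypothesis of that corollary with $n = m$, which already delivers a minimal bigraded free resolution of $I_{\mathcal{Z}}$. What remains is only to verify that, after this substitution, the index sets defined there simplify to the shapes stated here.

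I would begin by substituting $n = m$ into the description of $\mathcal{B}_0(\mathcal{Z})$ from Corollary \ref{corAlmostHom}, obtaining
$$\{(a,b,c,d)\mid a+d=m,\ b+c=m,\ 0\le b\le \min\{a,m\}\le a\le m\}.$$
Because the defining condition already forces $a\le m$, we have $\min\{a,m\}=a$, so the middle clause $\min\{a,m\}\le a$ is automatic and drops out. The remaining constraint $0\le b\le a\le m$ is exactly the $\mathcal{B}_0(\mathcal{Z})$ stated in Corollary \ref{ResACI}. The same simplification applies verbatim to $\mathcal{B}_1(\mathcal{Z})$ and $\mathcal{B}_2(\mathcal{Z})$: each carries an extra clause of the form $\min\{a,n\}\le a$ which becomes trivial once $n=m\ge a$. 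The bidegrees $(-a\alpha_1-b\alpha_2,-c\beta_1-d\beta_2)$ attached to each index tuple are unaffected by the substitution.

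Since the resolution produced by Corollary \ref{corAlmostHom} is already minimal, minimality for the specialized resolution is inherited automatically and requires no additional cancellation analysis. I do not anticipate any real obstacle here; the substance of the statement has been established in the preceding corollary (and ultimately rests on Theorem \ref{resolution} together with the minimality in Theorem \ref{minres}), and what I propose is a purely cosmetic rewriting that records the fully homogeneous case in a self-contained form convenient for later reference.
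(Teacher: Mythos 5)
Your proposal is correct and matches the paper's own proof, which simply invokes Corollary \ref{corAlmostHom}; your verification that $\min\{a,n\}=a$ becomes redundant when $n=m$ (since the constraints already force $a\le m$) is exactly the routine simplification needed. No further comment is required.
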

\begin{proof} Just use Corollary \ref{corAlmostHom}.    \end{proof}

\begin{remark}
Recently,  using Theorem \ref{cubo} and Corollary \ref{P1xP1 result}, it was proved in \cite{MFO},
that if $\mathcal{Z}$ is a homogeneous set of fat points whose
support is an almost complete intersection then,
$$I_{\mathcal{Z}}=
(Q_1,U_1)^{m}\cap(Q_1,U_2)^{m}\cap(Q_2,U_1)^{m}= J^{m}$$ \noindent
where we set $J:=(Q_1,U_1)\cap(Q_1,U_2)\cap(Q_2,U_1).$ That is,
the symbolic powers of $J$ and the regular powers are the same.
Therefore a proof of Corollary \ref{ResACI}  could  be given by
induction on $m$ since $J^{m}=J\cdot J^{m-1}.$
\end{remark}
In the next section we look at the symbolic powers of
$I_{\mathcal{Z}}$ in the non homogeneous case.

\section{Symbolic Vs Regular powers of a particular almost complete intersection}\label{powerZ}

As said in the introduction, given a homogeneous ideal $I$, the
$m$-th symbolic power of $I$ is the ideal $I^{(m)} = R\cap
(\cap_{P\in Ass(I)}(I^m R_P ))$. Following \cite{BH2}, for an
ideal of the form $I_X = \cap_{P_{ij}\in X}(I_{P_{ij}}^{m_{ij}}$ ) where $X\subseteq \popo$ is a finite set of points,  $ I_{P_{ij}}$  is the ideal generated by all forms vanishing at $P_{ij}$ and each $m_{ij}$ is a
non-negative integer, $I_X^{(m)}$ turns out to be
$\cap_{P_{ij}\in X}(I_{P_{ij}}^{mm_{ij}})$. If $I_X^m$ is the usual power, then we
have the containment $I_X^m \subseteq I_X^{(m)}$ and  it is a
difficult problem to determine when there are containments of
the form $I_X^{(m)}\subseteq I_X^r$. Furthermore, the $m$-th symbolic
power of $I_X$ has the form $I_{X}^{(m)} =\cap_{ij} I_{P_{ij}}^m$.

In this section we prove that if $\mathcal Z$ is a  fat ACI of type
(\ref{fatZ}), then $I_{\mathcal Z}^{(m)}=I_{\mathcal Z}^{m}.$
We start with the three non collinear points case by comparing the ideal $I_{\W}^m$ with
$I_{\W}^{(m)},$ where we denote by $I_{\W}^{(m)}:=I(m\cdot
m_{11}P_{11}+m\cdot m_{12}P_{12}+m\cdot m_{21}P_{21}).$

\begin{theorem}\label{compower}
Let $\W=m_{11}P_{11}+m_{12}P_{12}+m_{21}P_{21}$ be a fat ACI of three
non collinear  points in $\popo.$ Then $I_{\W}^{m}= I_{\W}^{(m)}.$
\end{theorem}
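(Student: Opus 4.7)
The inclusion $I_\W^m \subseteq I_\W^{(m)}$ is automatic, so the work is in the reverse direction. I would argue by induction on $m$, the case $m=1$ being trivial. For the inductive step it suffices to prove the single inclusion $I_\W^{(m)} \subseteq I_\W \cdot I_\W^{(m-1)}$; combined with the inductive hypothesis $I_\W^{(m-1)} = I_\W^{m-1}$, this yields $I_\W^{(m)} \subseteq I_\W^m$.

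The key point is that $I_\W^{(m)} = (H_1,V_1)^{m\cdot m_{11}} \cap (H_1,V_2)^{m\cdot m_{12}} \cap (H_2,V_1)^{m\cdot m_{21}}$ has exactly the same form as $I_\W$, with multiplicities scaled by $m$. Consequently, Proposition \ref{nicegens} applies verbatim to $I_\W^{(m)}$: it is minimally generated by the monomials $F=H_1^{a_1}H_2^{a_2}V_1^{b_1}V_2^{b_2}$ satisfying $a_1+b_1\ge m\,m_{11}$, $a_1+b_2\ge m\,m_{12}$, $a_2+b_1\ge m\,m_{21}$. So it suffices, for each such monomial $F$, to exhibit a factorization $F = G\cdot F'$ where $G$ is a monomial generator of $I_\W$ (as in Proposition \ref{nicegens}) and $F'$ is a monomial of $I_\W^{(m-1)}$.

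This reduces the theorem to a purely combinatorial claim: given non-negative integers $(a_1,a_2,b_1,b_2)$ meeting the three $m$-scaled constraints, one can find non-negative integers $(c_1,c_2,d_1,d_2)$ with $c_i\le a_i$, $d_j\le b_j$, satisfying simultaneously $c_1+d_1\ge m_{11}$, $c_1+d_2\ge m_{12}$, $c_2+d_1\ge m_{21}$ (so that $G=H_1^{c_1}H_2^{c_2}V_1^{d_1}V_2^{d_2}\in I_\W$) and the complementary inequalities $(a_1-c_1)+(b_1-d_1)\ge (m-1)m_{11}$, and the other two analogous ones (so that $F/G \in I_\W^{(m-1)}$). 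The linear relaxation always admits the uniform solution $c_i=a_i/m$, $d_j=b_j/m$, so the relevant polytope is non-empty; I would then produce an integer point by an explicit greedy construction, splitting into cases depending on which of $m_{11},m_{12},m_{21}$ is largest and on which of the $a_i,b_j$ are the binding variables (for instance, under the standing assumption $m_{21}\le m_{12}$, set $d_1=\min(b_1,m_{11})$, then $c_1=(m_{11}-d_1)_+$, then increase $c_1$ or $d_2$ to meet the $m_{12}$-constraint, and finally choose $c_2$ to meet the $m_{21}$-constraint).

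The main obstacle is precisely this last feasibility step: one must check, across all parameter regimes, that the greedy choices respect the upper bounds $c_i\le a_i$, $d_j\le b_j$ and that the complementary inequalities then follow from the hypothesis $F\in I_\W^{(m)}$. An alternative route, which may be cleaner, is to perform a secondary induction on $\mu=\min(m_{11},m_{21})$, using Proposition \ref{nicesplit} to write $I_\W = V_1\,I_{\W_1} + H_1^{m_{11}}H_2^{m_{21}}(H_1,V_2)^{(m_{12}-m_{11})_+}$ and Proposition \ref{nicecap} to control the intersection; the base case $\mu=0$ is ACM (collinear or the disjoint-lines configuration), where $I_\W^m = I_\W^{(m)}$ is already known from complete-intersection theory (\cite{ZS}, Appendix 6, Lemma 5).
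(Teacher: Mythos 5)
Your overall reduction coincides with the paper's: the containment $I_{\W}^m\subseteq I_{\W}^{(m)}$ is automatic, $I_{\W}^{(m)}$ is again the ideal of three non-collinear fat points with multiplicities $m\,m_{ij}$, so Proposition \ref{nicegens} applies and gives monomial generators $F=H_1^{a_1}H_2^{a_2}V_1^{b_1}V_2^{b_2}$ with $a_i+b_j\ge m\,m_{ij}$ for $(i,j)\in\{(1,1),(1,2),(2,1)\}$; one then inducts on $m$ by splitting each such $F$ as $F_1F_2$ with $F_1\in I_{\W}$ and $F_2\in I_{\W}^{(m-1)}$. But the decisive step --- actually exhibiting the integer exponent vector $(c_1,c_2,d_1,d_2)$ --- is exactly the one you leave open, and you say so yourself (``the main obstacle is precisely this last feasibility step''). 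A greedy construction ``splitting into cases'' with the feasibility of the complementary inequalities unverified is not a proof; the gap sits precisely where the content of the theorem is.

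The paper closes this gap with a single uniform choice, with no case analysis: write $a_i=c_im+r_i$ and $b_j=d_jm+s_j$ with $0\le r_i,s_j<m$, set $\delta_j=1$ if $s_j\neq 0$ and $\delta_j=0$ otherwise, and take $F_1=H_1^{c_1}H_2^{c_2}V_1^{d_1+\delta_1}V_2^{d_2+\delta_2}$ --- that is, round the $H$-exponents down and the $V$-exponents up to the nearest $m$-th. Since $r_i+s_j<2m$, the hypothesis $a_i+b_j\ge m\,m_{ij}$ yields $c_i+d_j+\bigl\lfloor (r_i+s_j)/m\bigr\rfloor\ge m_{ij}$, and $\bigl\lfloor (r_i+s_j)/m\bigr\rfloor\le\delta_j$ (if the floor is $1$ then $s_j\ge m-r_i>0$), so $F_1\in I_{\W}$; for $F_2$ one checks $a_i-c_i+b_j-d_j-\delta_j=(m-1)\bigl(c_i+d_j+(r_i+(s_j-1)_+)/(m-1)\bigr)\ge (m-1)m_{ij}$ because $(r_i+(s_j-1)_+)/(m-1)\ge\bigl\lfloor (r_i+s_j)/m\bigr\rfloor$. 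This is your ``uniform solution $c_i=a_i/m$, $d_j=b_j/m$'' of the linear relaxation made integral by a one-sided rounding, and it is the missing idea: the asymmetry (floor on the $H$'s, ceiling on the $V$'s) is what makes all three constraints and their complements hold simultaneously without splitting into regimes. Your proposed alternative via Propositions \ref{nicesplit} and \ref{nicecap} and induction on $\min(m_{11},m_{21})$ is likewise only a suggestion; it is not the paper's route and would require its own argument for how that decomposition behaves under taking powers.
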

\begin{proof} 
	First note that $I_{\W}^{(m)}$, by Proposition \ref{nicegens} is generated by a set, $\mathcal{G}(I_{\W}^{(m)}),$ of forms which are product of lines. Thus, take such a form  $F:=H_1^{a_1}H_2^{a_2}V_1^{b_1}V_2^{b_2}$ in $\mathcal{G}(I_{\W}^{(m)}),$ we have to show that $F\in I_\W^m.$ We will show that we can decompose the form $F$ as $F_1\cdot F_2$ with $F_1\in I_\W$ and $F_2\in I_\W^{(m-1)},$ therefore the theorem will follows by induction. Let us consider the Euclidean division in $\mathbb{N}$ of $a_i, b_j$ with $m,$ say  $a_i=c_im+r_i,$ 
    $b_j=d_jm+s_j,$ for $i,j\in \{1,2\}$. 
    We get, for $(i,j)\in \{(1,1), (1,2), (2,1) \}$
    $c_im+r_i+d_jm+s_j=a_i+b_j\ge m\cdot m_{ij},$ i.e. $c_i+d_j+(r_i+s_j)/ m \ge  m_{ij}$ then $$c_i+d_j+\left\lfloor (r_i+s_j)/ m\right\rfloor \ge  m_{ij}.$$

    Let $\delta_i= \begin{cases}
    1& \text{if\ } s_i \neq 0\\
    0 & \text{if\ } s_i =0
    \end{cases}$ and set $$F_1:=H_1^{c_1}H_2^{c_2}V_1^{d_1+\delta_1}V_2^{d_2+\delta_2}\ \ \text{ and}\ \  F_2:=H_1^{a_1-c_1}H_2^{a_2-c_2}V_1^{b_1-d_1-\delta_1}V_2^{b_2-d_2-\delta_2}.$$

    For $(i,j)\in \{(1,1), (1,2), (2,1) \}$ we have $c_i+d_j +\delta_j\ge  c_i+d_j+\left\lfloor (r_i+s_j)/ m\right\rfloor \ge  m_{ij}.$
    This guarantees that $F_1\in I_\W.$ Analogously $a_i-c_i+b_j-d_j-\delta_j= c_i(m-1)+r_i + d_j(m-1)+(s_j-1)_+=$
    $(m-1)(c_1 + d_1+ (r_i+(s_j-1)_+)/(m-1) ).$
    Since   $(r_i+(s_j-1)_+)/(m-1)\ge \left\lfloor (r_i+s_j)/ m\right\rfloor $ we are done.
\end{proof}

We are ready to prove the main result of this section. Set
$I_{\mathcal{Z}}^{(m)}:=(Q_1,U_1)^{m_{11}m}\cap(Q_1,U_2)^{m_{12}m}\cap(Q_2,U_1)^{m_{21}m},$
i.e. ${\mathcal{Z}}^{(m)}$ is 
\begin{center}
    \begin{picture}(150,110)(25,-10)
    \put(5,30){$\mathcal{Z}^{(m)} = $}
    \put(60,-5){\line(0,1){80}}
    \put(100,-5){\line(0,1){80}}
    \put(140,35){\line(0,1){40}}


    \put(60,-5){\line(1,0){40}}
    \put(60,35){\line(1,0){80}}
    \put(60,75){\line(1,0){80}}


    \put(68,13){$m_{21}m$} \put(63,18){$$}

    \put(68,53){$m_{11}m$} \put(83,58){$$}


    \put(108,53){$m_{12}m$} 


    \end{picture}
\end{center}
\begin{theorem}\label{compowerZZ}
Let $\mathcal Z$ be a fat ACI of type (\ref{fatZ}). Then $I_{\mathcal{Z}}^{m}= I_{\mathcal{Z}}^{(m)}.$
\end{theorem}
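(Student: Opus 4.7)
The plan is to adapt the three-point argument from Theorem \ref{compower} verbatim, using the block notation $Q_1,Q_2,U_1,U_2$ of Notation \ref{NotZZ}. The key observation is that $\mathcal{Z}^{(m)}$ is itself a fat ACI of type (\ref{fatZ}), with block multiplicities $m\,m_{11},\,m\,m_{12},\,m\,m_{21}$. By Remark \ref{seeZ} (the block version of Proposition \ref{nicegens}), the ideal $I_{\mathcal{Z}}^{(m)}$ is minimally generated by forms of the shape
$$F \;=\; Q_1^{a_1} Q_2^{a_2} U_1^{b_1} U_2^{b_2}, \qquad a_1+b_1\ge m\,m_{11},\ \ a_1+b_2\ge m\,m_{12},\ \ a_2+b_1\ge m\,m_{21}.$$

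I would prove the containment $I_{\mathcal{Z}}^{(m)} \subseteq I_{\mathcal{Z}}^{m}$ by induction on $m$ (the reverse inclusion is automatic, and the base case $m=1$ is Lemma \ref{ZZasZ}). Given a generator $F$ as above, perform the Euclidean divisions $a_i = c_i m + r_i$ and $b_j = d_j m + s_j$ with $0\le r_i,s_j < m$, and set $\delta_j = 1$ if $s_j\ne 0$ and $\delta_j = 0$ otherwise. Following the three-point template, split $F = F_1\cdot F_2$ with
$$F_1 \;:=\; Q_1^{c_1} Q_2^{c_2} U_1^{d_1+\delta_1} U_2^{d_2+\delta_2}, \qquad F_2 \;:=\; Q_1^{a_1-c_1} Q_2^{a_2-c_2} U_1^{b_1-d_1-\delta_1} U_2^{b_2-d_2-\delta_2}.$$
It then suffices to check $F_1\in I_{\mathcal{Z}}$ and $F_2\in I_{\mathcal{Z}}^{(m-1)}$; the inductive hypothesis upgrades $F_2$ to membership in $I_{\mathcal{Z}}^{m-1}$, so $F=F_1F_2\in I_{\mathcal{Z}}^{m}$.

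For each of the three relevant pairs $(i,j)\in\{(1,1),(1,2),(2,1)\}$, the $F_1$-condition becomes $c_i+d_j+\delta_j\ge w_{ij}$, and the $F_2$-condition becomes $(m-1)(c_i+d_j)+r_i+(s_j-\delta_j)\ge (m-1)w_{ij}$. Both are derived by the same case split on whether $r_i+s_j<m$ or $r_i+s_j\ge m$ that appears in Theorem \ref{compower}: in the first case one has $c_i+d_j\ge w_{ij}$ and the extra nonnegative terms suffice; in the second case $r_i+s_j\ge m$ forces $s_j\ge 1$, hence $\delta_j=1$ and $r_i+s_j-\delta_j\ge m-1$, which combined with $c_i+d_j+1\ge w_{ij}$ yields both inequalities at once.

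The only conceptual point to verify is that the block structure causes no trouble: since each generator of $I_{\mathcal{Z}}$ is a product of powers of the \emph{blocks} $Q_k,U_k$ (not of the individual $H_i$ or $V_j$), the Euclidean division needs only to be performed on the four block-exponents $a_1,a_2,b_1,b_2$, so the argument collapses exactly to the three-point case. I do not expect any genuine obstacle; the entire proof is essentially the substitution $H_i\rightsquigarrow Q_i$, $V_j\rightsquigarrow U_j$ in the proof of Theorem \ref{compower}.
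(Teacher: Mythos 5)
Your proposal is correct and matches the paper's proof, which likewise reduces to the three-point argument of Theorem \ref{compower} via Lemma \ref{ZZasZ} and Remark \ref{seeZ} (substituting the blocks $Q_i,U_j$ for the individual lines $H_i,V_j$). You have in fact spelled out the Euclidean-division and $\delta_j$ bookkeeping more explicitly than the paper does, but the route is the same.
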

\begin{proof}Since Lemma \ref{ZZasZ} and Remark \ref{seeZ}, we can repeat the same argument as in Theorem \ref{compower}.
\end{proof}


\end{document}